\numberwithin{equation}{section}
\def\D{\mathscr{D}}
\def\Cr{\mathscr{C}}  
\def\d{\partial}
\def\bX{{\symb X}}
\def\XX{{\mathbb X}}
\def\bR{{\mathbb R}}
\def\OO{{\mathrm O}}
\def\E{{\symb E}}
\def\cD{{\D}}
\theoremstyle{definition}
\newtheorem{definition}{Definition}[section]
\theoremstyle{plain}
\newtheorem{theorem}[definition]{Theorem}
\newtheorem{proposition}[definition]{Proposition}
\newtheorem{lemma}[definition]{Lemma}
\newtheorem{corollary}[definition]{Corollary}
\theoremstyle{remark}
\newtheorem{remark}[definition]{Remark}
\title{Singular paths spaces and applications}
\date{}
\author{Carlo Bellingeri, Peter K. Friz, Máté Gerencsér}
\begin{document}
\maketitle
{{\bf Abstract:} Motivated by recent applications in rough volatility and regularity structures, notably the notion of singular modelled distribution, we study paths, rough paths and related objects with a quantified singularity at zero. In a pure path setting this allows us to leverage on existing SLE Besov estimates to see that SLE traces takes values in a singular H\"older space, which quantifies a well-known boundary effect in the regime $\kappa < 1$. We then consider the integration theory against singular rough paths and some extensions thereof. This gives a method to reconcile, from a regularity structure point of view, different singular kernels used to construct (fractional) rough volatility models and an effective redcoloruction to the stationary case which is crucial to apply general renormalisation methods.}

\section{Introduction}
In classical analysis, the improper integral $ \int_{0+}^1 f(t) dt := \lim_{\eps \downarrow 0} \int_{\eps}^1 f(t) dt$ is studied, with application in many parts of pure and applied mathematics.  It is a basic observation of this paper that this extends to improper {\em Young} and then {\em rough} integrals, as well as recent ramifications to {\em rough volatility} within regularity structures \cite{bayer2019regularity}, \cite[Ch.14]{FH2020}, which provided the original motivation for this article. The notion of {\em singular controlled rough path}, in a H\"older setting, is seen to be consistent with Hairer's {\em singular modelled distributions} \cite{Hairer2014}. We give a self-contained presentation of these spaces including a new time-change characterisation. Moreover, we will introduce the notion of \emph{singular rough paths}, which can be translated in the language of regularity structures as a ``singular model" defined over the \emph{rough path regularity structure}, see \cite[Ch. 13]{FH2020}.
\medskip

Our first application of such singular spaces comes from the seemingly remote field of SLE theory. More specifically, the estimates recently obtained in \cite{friz2017regularity} are seen to imply also singular Besov regularity (w.r.t. to half-space capacity parametrisation) of (chordal) SLE trace. {\color{black}We briefly recall that the classical theory of Loewner evolution gives a one-to-one correspondence between scalar continuous real-valued functions $(\xi_t)_{t\geq 0}$ and families of continuously growing compact hulls $(K_t)_{t\geq 0}$ in the complex upper half-plane $\mathbb{H}=\{z\in \mathbb{C}\colon \Im(z) > 0\}$. There is much interest in the case where these sets admit a continuous trace: i.e. there exists a continuous curve $(\gamma_{t})_{t\geq 0} \subset \overline{\mathbb{H}}$ such that, for all $t>0$, $ \mathbb{H}\setminus K_t$ is the unique unbounded component of $\mathbb{H}\setminus  \gamma([0, t])$ (or even $K_t= \gamma([0,t])$ with $\gamma$ a simple curve). The famous Rohde–Schramm Theorem \cite{rohde_schramm05} asserts that Brownian motion with diffusivity $\kappa\neq 8$  a.s. gives rise to a continuous trace, which is simple when $\kappa \leq 4$. This process is  known as (chordal) SLE trace, denoted by $\gamma_k$. Even for the smoothest possible driver $\xi_t\equiv 0$, with explicitly know trace $ 2i\sqrt{t}$, this function is smooth away from zero. This effect remains visible for $\gamma_k$, for $\kappa\leq 1$, which enjoys H\"older regularity of exponent $\alpha> 1/2$ on compacts away from zero, but (trivially) of exponent $1/2$ on intervals $[0,T]$. We will refer to this phenomenon as \emph{the boundary effect of SLE at $t=0+$}.} Upon extension of classical embeddings to the singular case, we can use  singular H\"older spaces to quantify this boundary effect. {\color{black} Using the notion of $\mathcal{C}^{\alpha, \eta}((0,T])$ space in  Definition \ref{Defn_sing_Holder}, we obtain  when $\kappa \in (0,1)$ the following  a.s. regularity result }
\begin{equation} \label{equ:us}
\gamma_\kappa \in \CC^{\alpha_*^-,(\frac{1}{2})^{-}}((0,1];\mathbb{H})\,,
\end{equation}
 where $\alpha_*^-$ and $(1/2)^{-}$ should be read as any $0<\eta<1/2 $ and any $\alpha < \alpha_* = \alpha_*  (\kappa)$ is given by 
\[\alpha_{\ast}\left( \kappa \right) =1-\frac{\kappa }{24+2\kappa -8\sqrt{\kappa +8}}\,. 
\]
{\color{black}See Theorem \ref{thm_SLE}.} This refines classical results of Lind \cite{Lind2008} and Johansson Viklund{\color{black},} Lawler \cite{JVL11}
\begin{equation} \label{equ:LVL}
\gamma_\kappa 
\in 
\CC^{(\alpha_*^- \wedge \frac{1}{2})}([0,1],\mathbb{H}) 
\cap 
\CC^{\alpha_*^-}([\eps,1],\mathbb{H}).
\end{equation}
By our characterization of $\CC^{\alpha,\eta}$ spaces,  from \eqref{equ:us} we also see that, for $\kappa \in (0,1)$,
\begin{equation} \label{equ:us2}
t \mapsto \gamma_\kappa (t^{2})   \in \CC^{\alpha_*^{-}}([0,1],\mathbb{H}) \;.
\end{equation}
See Corollary \ref{cor_SLE}. Noting that $\alpha_* (\kappa)$ decreases from $1$ to $1/2$, as $\kappa$ ranges from $0$ to $1$, this constitutes a natural SLE extension of know deterministic results (\cite{Friz2016,Shekhar2019} and the references therein) that  $\{ t \mapsto \gamma (t^2) \} \in \CC^1([0,1];\mathbb{H})$, for traces $\gamma$ driven by classes of finite variation paths {\color{black} with some additional properties, like finite energy or being locally regular. An additional heuristic reason behind \eqref{equ:us2} is also related to parametrisation of $\gamma_{\kappa}$, the half-plane capacity, which should behave heuristically like the square root of the intrinsic length, see \cite{JVL11}. Then the reparametrization $t \mapsto \gamma_{\kappa} (t^2) $ can be interpreted as a "parametrisation by arc length``, which absorb the boundary effect.} We then see that the  SLE trace gives rise to (singular) rough paths, in the spirit of \cite{werness2012, horatioSLE}, but insisting on the finer H\"older scale (instead of $p$-variation).

\medskip
Our second application deals with an aspect of fractional stochastic modelling. In particular, we want to reconcile  ``similar'' definitions of a fractional Brownian motion in a rough analysis perspective.  Loosely speaking, we consider the process
$$ 
W^H = K^H {\color{black}*}\, \xi,
$$ 
where $K^H$ is a singular kernel satisfying $K^H(x)\sim|x|^{H-1/2}${\color{black},} $\xi=dW$ is a real-valued white noise {\color{black} and $*$ is the operation of convolution}. To regain analytic stability in the rough path sense, we need to enrich $W^H$ with  It\^o objects like $\int (W^H) dW$ or equivalently $W^H \xi$. This construction has been useful in the analysis of rough volatility models \cite{bayer2019regularity,friz2018precise}, where the modelling (e.g. \cite{bayer2016pricing}) imposes a Riemann--Liouville interpretation of $W^H$, i.e. as left-hand side of
\begin{equation}\label{two_noises}
 \int_0^t |t-s|^{H-1/2} dW_s\quad \text{vs.}   \quad \int_{-\infty}^t K^H(t-s) dW_s \, .
\end{equation}
However, when one applies general results in the theory of regularity structures, stationarity of the noise is important, as is the assumption of a 
compactly supported kernel $K^H$ with the right fractional behaviour near $0$, which points to a different interpretation of $W^H$, namely the right-hand side of \eqref{two_noises}. As a prototypical example, we consider the Wong--Zakai approximation of the random distribution $W^H \xi$, obtained from mollified noise $\xi^\eps = \xi * \varrho_\eps$, {\color{black} where $\varrho_{\eps}(t)= \eps^{-1}\varrho(\eps^{-1}t)$ for some smooth, compactly supported  function $\varrho\colon \bR\to \bR$ such that $\int \varrho =1$ and $\varepsilon >0$}. If we interpret $W^H$  as a Riemann--Liouville fBM, the convergence in a rough (model) topology of the quantity $W^{\eps,H} \xi^\eps$, suitably renormalised, was done by hand in \cite{bayer2019regularity}, also for non-stationary wavelet approximations. On the other hand, the right-hand interpretation of \eqref{two_noises} allows for the use of the general BPHZ renormalisation theorem \cite{ch2016analytic} which in the present setting asserts that $W^{\eps,H} \xi^\eps$ converges upon subtracting its (constant) mean.
 
As it turns out, singular path spaces allow regarding the (non-stationary) left-hand side as singularly controlled by the right-hand side, thereby providing an elegant way to reconcile the difference in \eqref{two_noises}. A related construction appeared in recent work in the setting of singular SPDEs with boundary \cite{mate2018}. Thanks to singular rough integration (reconstruction), statements like
\[
\int_0^t f (W^{\eps,H}_s) \xi^{\eps}_sds - c^{\eps,H} \int_0^t Df (W^{\eps,H}_s) ds \to \int_0^t f ({W}^H_s) d W_s \;, 
\]
with diverging It\^o-Stratonovich correction, are conveniently obtained, see Theorem \ref{final_thm}. We restrict our analysis to $H\in(1/4,1/2)$. This is only important to the extent that  $W^H \xi$ is ``enough'' to make rough analysis work, otherwise branching objects of the form $(W^H)^k \xi$, for $k=1,2,...,K(H)$, are required for robust integration. Since these considerations are well-known and somewhat orthogonal to the ``singular'' theme of this note, we have not pushed for generality in this direction. Similarly, as noted in \cite{bayer2019regularity}, (possibly multidimensional) It\^o--Volterra equations of the form
\begin{equation} \label{equ:IV}
    Y_t= Y_0 + \int_0^t |t-s|^{H-1/2}  F(Y_s) dW_s  
\end{equation}
admit a rough solution theory, by a fixed point arguments in controlled rough path (modelled distribution) space, once the underlying rough paths (model) has been constructed. (This step does not require any stationarity of the noise{\color{black}).}
Wong-Zakai approximation again requires renormalisation, and for appealing to general results \cite{bruned2019rough,ActionRHS} stationarity is again crucial. The same approach then applies, with \eqref{equ:IV} solved in a space {\em singular} controlled rough path (singular modelled distribution), relative to a 
{\em stationary} model that fits, as before, in the BPHZ setting of \cite{ch2016analytic}. For instance, in case $H > 1/4$ this reasoning shows {\color{black} that  the sequence  of solutions} $Y^\eps$
$$
Y^\eps_t= Y^\eps_0 + \int_0^t  |t-s|^{H-1/2} \left( F(Y^\eps_s) \xi^\eps _s  -  c^{\eps,H} F(Y^\eps_s)F'(Y^\eps_s) \right )ds\,,
$$
{\color{black} converges to $Y$ in probability uniformly on compact sets.} Singular spaces are thus seen to provide an approximation theory for It\^o-Volterra problems, commonly used in fractional modelling, as bona fide perturbations of stationary noise problems in the setting of regularity structures for which general results are available.

\section{Singular paths and spaces} \label{sec:SPS}
\subsection{H\"older path spaces}
We are interested in the following generalisation of the usual H\"older spaces. 

\begin{definition}\label{Defn_sing_Holder}
Let $0 < \alpha <  1$ and $\eta \le \alpha$. We say that a continuous path $Y\colon (0,T]\to \R$ is $(\alpha, \eta)$-H\"older, in symbols $Y \in \CC^{\alpha, \eta}((0,T])$  if it satisfies
\begin{equation}\label{def_alphaeta_holder}
\| Y \|_{\alpha,\eta} \eqdef \sup_{0<s<t\leq T} \frac{|Y_t - Y_s|}{s^{\eta-\alpha}|t-s|^{\alpha}} < \infty\,.
\end{equation}
\end{definition}
This is a Banach space with norm given by $|Y_T| + \| Y \|_{\alpha,\eta}$. The classical H\"older space $\CC^{\alpha}([0,T])$ can be identified with $\CC^{\alpha, \alpha}((0,T])$.  The regime $\eta > \alpha$ amounts to enforce a certain vanishing at $0$ and is of no interest to us. We have the following characterisation of $\CC^{\alpha, \eta}((0,T])$ in terms of $\|\cdot\|_{\alpha; I}$ the H\"older semi-norm  on the interval $I$.
\begin{proposition}\label{prop:char_alphaeta_Holder}
An $\alpha$-H\"older path $Y$ belongs to $\CC^{\alpha,\eta}((0,T])$ if and only if $\| Y \|_{\alpha; [\eps,T]} = \OO( \eps^{\eta-\alpha})$   as $\varepsilon\to 0^+$.
Moreover one has the identity
\begin{equation}\label{prop:ident_norms}
\| Y \|_{\alpha,\eta}=\sup_{0<\eps\leq  T}\| Y \|_{\alpha; [\eps,T]}\eps^{\alpha- \eta}\,.
\end{equation}
\end{proposition}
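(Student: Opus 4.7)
The plan is to prove the identity \eqref{prop:ident_norms} first, from which the characterisation follows almost immediately.

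For the upper bound $\sup_{0<\eps\le T}\|Y\|_{\alpha;[\eps,T]}\eps^{\alpha-\eta}\le \|Y\|_{\alpha,\eta}$, I would fix $\eps\in(0,T]$ and a pair $\eps\le s<t\le T$, and use the key monotonicity observation: since $\eta-\alpha\le 0$, the function $s\mapsto s^{\eta-\alpha}$ is non-increasing, so $s^{\eta-\alpha}\le \eps^{\eta-\alpha}$. Then
\[
\frac{|Y_t-Y_s|}{|t-s|^\alpha}=\frac{|Y_t-Y_s|}{s^{\eta-\alpha}|t-s|^\alpha}\,s^{\eta-\alpha}\le \|Y\|_{\alpha,\eta}\,\eps^{\eta-\alpha},
\]
and taking the supremum over such $s<t$ and then multiplying by $\eps^{\alpha-\eta}$ yields the bound.

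For the reverse inequality, I would take an arbitrary pair $0<s<t\le T$ in the definition of $\|Y\|_{\alpha,\eta}$ and make the natural choice $\eps=s$. Then $(s,t)$ lies in $[\eps,T]$, so $|Y_t-Y_s|/|t-s|^\alpha\le \|Y\|_{\alpha;[s,T]}$, and dividing by $s^{\eta-\alpha}$ gives
\[
\frac{|Y_t-Y_s|}{s^{\eta-\alpha}|t-s|^\alpha}\le \|Y\|_{\alpha;[s,T]}\,s^{\alpha-\eta}\le \sup_{0<\eps\le T}\|Y\|_{\alpha;[\eps,T]}\,\eps^{\alpha-\eta},
\]
and taking the sup over $s,t$ gives \eqref{prop:ident_norms}.

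The equivalence in the first statement is then a direct consequence: if $\|Y\|_{\alpha,\eta}<\infty$ then the upper-bound computation already gives $\|Y\|_{\alpha;[\eps,T]}\le \|Y\|_{\alpha,\eta}\eps^{\eta-\alpha}=\OO(\eps^{\eta-\alpha})$. Conversely, if $\|Y\|_{\alpha;[\eps,T]}=\OO(\eps^{\eta-\alpha})$ as $\eps\downarrow 0$, then $\|Y\|_{\alpha;[\eps,T]}\eps^{\alpha-\eta}$ is bounded near $0$; on any interval $[\eps_0,T]$ it is bounded because $Y$ is $\alpha$-Hölder on $[\eps_0,T]$ (so $\|Y\|_{\alpha;[\eps,T]}\le \|Y\|_{\alpha;[\eps_0,T]}<\infty$) and $\eps^{\alpha-\eta}$ is continuous on $[\eps_0,T]$. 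Hence the supremum in \eqref{prop:ident_norms} is finite, i.e.\ $\|Y\|_{\alpha,\eta}<\infty$. There is no serious obstacle here; the only point requiring mild attention is justifying finiteness over the whole range $(0,T]$ rather than just in a neighbourhood of $0$, which is handled by the pointwise $\alpha$-Hölder hypothesis.
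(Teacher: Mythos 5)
Your proof is correct and follows essentially the same route as the paper: both directions rest on the monotonicity of $s\mapsto s^{\eta-\alpha}$ for the upper bound, and on the observation (which you phrase as ``take $\eps=s$'' and the paper phrases by bounding $|Y_t-Y_\eps|$ and then taking the sup over $\eps$) for the reverse. Your final paragraph, justifying why boundedness of $\|Y\|_{\alpha;[\eps,T]}\eps^{\alpha-\eta}$ near $0$ together with the $\alpha$-H\"older hypothesis gives finiteness of the full supremum, is a small but worthwhile point of care that the paper's proof leaves implicit.
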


\begin{proof}
Let $ Y\in \CC^{\alpha, \eta}((0,T])$. By definition of $\CC^{\alpha, \eta}((0,T])$ and the monotonicity of $s^{\eta- \alpha}$, for every $0<\eps<1$ and every $\eps\le s<t<T $ one has 
\[|Y_t-Y_s|\leq \| Y \|_{\alpha,\eta} s^{\eta- \alpha}|t-s|^{\alpha}\leq \| Y \|_{\alpha,\eta}\eps^{\eta-\alpha}|t-s|^{\alpha}.\]
By taking the sup on $s$ and $t$ we obtain 
\[C := \sup_{\eps \in (0,T]}\| Y \|_{\alpha; [\eps,T]}\eps^{\alpha- \eta}\leq \| Y \|_{\alpha,\eta}\,.\]
Conversely, for any $\eps \in (0,T]$,
\begin{equation}\label{eq:best_constant}
\| Y \|_{\alpha; [\eps,T]}|t-\eps|^{\alpha}\leq C\eps^{\eta- \alpha}|t-\eps|^{\alpha}\,.
\end{equation}
Since $Y$ is $\alpha$-H\"older on $[\eps,T] $ we have trivially
\[
|Y_t-Y_{\eps}|\leq \| Y \|_{\alpha; [\eps,T]}|t-\eps|^{\alpha}\leq C\eps^{\eta- \alpha}|t-\eps|^{\alpha}\,.
\]
By taking the sup on $\eps$ and $t$, one has $\| Y \|_{\alpha,\eta} \leq C$ and the proof is finished.
\end{proof}
Since we are considering the sup over a fixed set $(0,T]$, we can equivalently modify the underlying set of values $s$ such that the quantities $|t-s|$ and $s$ are comparable. {\color{black}The following  equivalence of seminorms will allow us to compare singular H\"older spaces with  singular modelled distributions in section \ref{sec_reg_struct}}.
\begin{lemma}\label{lem:equality}
{Let $\alpha \in (0,1)$, $\delta>0$ and $\eta\leq  \delta$. For any function $Y\colon (0,T]\to\bR$ there exists a constant $C>0$ depending on \color{black} $\alpha,\eta,\delta$ such that one has}
\begin{equation}\label{eq:equivalence}
 \sup_{\substack{0<s<t\leq T\\ |t-s|\leq s}} \frac{|Y_t - Y_s|}{s^{\eta-\delta}|t-s|^{\alpha}}\leq \sup_{\substack{0<s<t\leq T}} \frac{|Y_t - Y_s|}{s^{\eta-\delta}|t-s|^{\alpha}}\leq C\sup_{\substack{0<s<t\leq T\\ |t-s|\leq s}} \frac{|Y_t - Y_s|}{s^{\eta-\delta}|t-s|^{\alpha}}\,.
\end{equation}
\end{lemma}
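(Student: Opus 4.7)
The plan is straightforward: the first inequality in \eqref{eq:equivalence} is immediate, since its right-hand side is a supremum over a strictly larger set of pairs $(s,t)$. All the content lies in the reverse inequality, which I would establish by a dyadic chaining argument.

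Fix $0 < s < t \leq T$ with $t - s > s$; otherwise the pair $(s,t)$ already belongs to the restricted region and there is nothing to prove. Let $M$ denote the restricted supremum appearing on the right of \eqref{eq:equivalence}. I would introduce the dyadic mesh $s_k = 2^k s$ for $k = 0, 1, \dots, N-1$, followed by $s_N = t$, where $N$ is chosen so that $2^{N-1} s \leq t < 2^N s$. By construction every consecutive pair satisfies $s_{k+1} - s_k \leq s_k$, so it lies in the restricted region and the definition of $M$ gives
$$|Y_{s_{k+1}} - Y_{s_k}| \leq M\, s_k^{\eta-\delta}(s_{k+1} - s_k)^{\alpha} \leq C\, M\, (2^k s)^{\eta - \delta + \alpha}.$$
A triangle inequality along the chain then yields
$$|Y_t - Y_s| \leq C\, M\, s^{\eta - \delta + \alpha} \sum_{k=0}^{N-1} 2^{k(\eta - \delta + \alpha)}.$$

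To finish, I would split according to the sign of $\beta \eqdef \eta - \delta + \alpha$. If $\beta < 0$ the geometric series is summable and the bound collapses to $C\, M\, s^{\eta-\delta+\alpha}$, which is at most $C\, M\, s^{\eta-\delta}(t-s)^{\alpha}$ because $s \leq t-s$ and $\alpha > 0$. If $\beta > 0$ the sum is comparable to its last term, giving $C\, M\, (2^N s)^{\beta} \leq C\, M\, (t-s)^{\beta} \leq C\, M\, s^{\eta - \delta}(t-s)^{\alpha}$, where the final step uses $\eta - \delta \leq 0$ together with $s \leq t-s$. The borderline case $\beta = 0$ reduces to $N \lesssim \log_2(t/s)$, dominated by $(t/s)^\alpha$ since $t/s \geq 2$ and $\alpha > 0$; unwinding produces the same bound.

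The only mild nuisance is tracking the constant uniformly across the three regimes, but this is routine. The structural inputs are simply $\alpha > 0$ and $\eta \leq \delta$, which together ensure that $s^{\eta-\delta}(t-s)^\alpha$ absorbs both the geometric and the logarithmic contributions from the chaining step.
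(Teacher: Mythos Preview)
Your proposal is correct and follows essentially the same dyadic chaining strategy as the paper: both introduce the geometric sequence $s_k = 2^k s$ up to the index $N \approx \log_2(t/s)$, telescope along consecutive pairs (which all satisfy the restriction $|s_{k+1}-s_k|\leq s_k$), and then split into the three regimes $\alpha+\eta-\delta$ positive, zero, or negative. The paper's bookkeeping in the $\beta>0$ case is slightly different---it passes through the inequality $t^\alpha - s^\alpha \leq C'(t-s)^\alpha$ rather than directly bounding $(2^N s)^\beta$ by a multiple of $(t-s)^\beta$---but the two arguments are interchangeable.
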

\begin{proof}
We use the shorthand notation $[Y]$ to denote the {\color{black} seminorm where the $\sup$ contains the condition $|t-s|\leq s$}. We fix $0<s<t$ and using the shorthand notation  $N=\lfloor\log_2(t/s)\rfloor$, we consider the sequence of values
\begin{equation}
s_n=\begin{cases}t & n=N+1  \\2^ns & n\leq N\,.\end{cases} 
\end{equation}
Since these values satisfy $|s_{n+1}- s_n|\leq s_n$ for every $n\leq N$, we can iterate the triangle inequality obtaining
\begin{equation}\label{eq:triangle_ineq}
\begin{split}
|Y_t-Y_s|&\leq \sum_{n=0}^{N}|Y_{s_{n+1}}- Y_{s_n}|\leq [Y] \sum_{n=0}^{N}|s_{n+1}- s_n|^{\alpha}s_{n}^{\eta-\delta}\\&= [Y](2^Ns)^{\eta -\delta}\left((t- 2^Ns)^{\alpha}+ 2^{-N(\eta-\delta)}\frac{1-2^{N (\alpha+\eta-\delta)}}{1- 2^{(\alpha+\eta-\delta)}}s^{\alpha}\right)\,.
\end{split}
\end{equation}
When {\color{black}$\alpha+\eta -  \delta>0$} we can easily estimate the second line of \eqref{eq:triangle_ineq} obtaining
\[|Y_t-Y_s|\leq [Y]s^{\eta -\delta} \left((t- 2^Ns)^{\alpha}+ \frac{2^{N\alpha}-1}{2^{\alpha+ \eta- \delta}-1} s^{\alpha}\right)\,.\]
Using the definition of $N$ and the H\"older behaviour of the function $t\to t^{\alpha}$ there exists a constant $C'>0$ depending on $\alpha,\eta,\delta$ such that
\[\frac{2^{N\alpha}-1}{2^{\alpha+ \eta- \delta}-1}\leq \frac{1}{2^{\alpha+ \eta- \delta}-1}\frac{(t^{\alpha}- s^{\alpha})}{s^{\alpha}}\leq {\color{black}C'}\frac{(t- s)^{\alpha}}{s^{\alpha}}\,.\]
Therefore we obtain
\[|Y_t-Y_s|\leq (1+C')[Y]s^{\eta- \delta}(t-s)^{\alpha}\,.\]
Dividing this expression by $s^{\eta- \delta}(t-s)^{\alpha}$ and taking the sup, we conclude. In case $\alpha+\eta - \delta=0$, {\color{black} we can repeat all the calculation in \eqref{eq:triangle_ineq}, obtaining the inequality
\begin{equation}\label{eq:triangle_ineq_2}
\begin{split}
|Y_t-Y_s|\leq [Y] \left((t- 2^Ns)^{\alpha}s^{-\alpha}+N\right)\,.
\end{split}
\end{equation}
Using the elementary estimate
\[N \leq \frac{2^{\alpha N}-1}{\alpha \ln(2)}\]
and the definition of $N$, there exist a constant $c>0$ depending on $\eta$, $\delta$ and $\alpha$, such that 
\[|Y_t-Y_s|\leq [Y](1+c)(t-s)^{\alpha}s^{-\alpha}\,.\]
In case $\alpha+\eta- \delta< 0$ we bound  the second line of \eqref{eq:triangle_ineq} with
\[|Y_t-Y_s|\leq [Y] s^{\eta -\delta}\left((t- 2^Ns)^{\alpha}+ \frac{N\wedge 1}{1-2^{\alpha+ \eta- \delta}} s^{\alpha}\right)\,.\]
Using again the definition of $N$, we see that when $N\geq 1$ then $ s< |t-s|$ and there exists a constant $c'>0$ such that
\[|Y_t-Y_s|\leq [Y](1+c')(t-s)^{\alpha}s^{\eta -\delta}\,,\]
thereby obtaining the result.}
\end{proof}

An interesting characterisation of the functions $\CC^{\alpha, \eta}((0,T])$ is provided when $\eta>0$.
\begin{proposition}\label{prop_charach}
Let $0 < \alpha <  1$ and $0<\eta \leq \alpha$. A continuous path $Y$ belongs to $  \CC^{\alpha,\eta}((0,T])$ if and only if  the  re-parametrisation 
$t\to Y(t^{\alpha/\eta})$ belongs to $\CC^{\alpha}([0,T])$.
\end{proposition}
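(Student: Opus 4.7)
The plan is to set $\beta := \alpha/\eta \geq 1$ and $\tilde Y(u) := Y(u^\beta)$, and to translate each direction of the equivalence via the mean value theorem applied to $x \mapsto x^\beta$ or its inverse. The central algebraic fact is that $\beta \eta = \alpha$, which is what makes the singular weight $s^{\eta-\alpha}$ and the Jacobian factor from the MVT cancel exactly in the comparable-arguments regime $u \sim v$.

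For the direction $(\Rightarrow)$, I would first exploit $\eta > 0$: the dyadic estimate $|Y_t - Y_{t/2}| \leq \|Y\|_{\alpha,\eta}\,(t/2)^{\eta-\alpha}(t/2)^\alpha \lesssim t^\eta$ summed across scales shows that the limit $Y_0 := \lim_{s \downarrow 0} Y_s$ exists and $|Y_t - Y_0| \lesssim t^\eta$, hence $|\tilde Y_v - \tilde Y_0| \lesssim v^{\beta \eta} = v^\alpha$. For $0 < u < v$ with $v \leq 2u$, the MVT yields $v^\beta - u^\beta \leq \beta (2u)^{\beta - 1}(v-u)$, and combined with the singular H\"older bound this gives
\begin{equation*}
|\tilde Y_v - \tilde Y_u| \leq \|Y\|_{\alpha,\eta}\, u^{\beta(\eta - \alpha)}(v^\beta - u^\beta)^\alpha \lesssim u^{\beta \eta - \alpha}(v-u)^\alpha = (v-u)^\alpha,
\end{equation*}
using $\beta \eta = \alpha$. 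I would then invoke Lemma \ref{lem:equality} applied to $\tilde Y$ (with $\delta$ and the singular exponent both taken equal to $\alpha$, so the weights are trivial) to upgrade this local-in-scale bound under the constraint $|v-u|\leq u$ to the full H\"older estimate $|\tilde Y_v - \tilde Y_u| \lesssim (v-u)^\alpha$ for all admissible $0 < u < v$.

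For the direction $(\Leftarrow)$, given $\tilde Y \in \CC^\alpha$ and any $0 < s < t \leq T$, I set $u = s^{1/\beta}$ and $v = t^{1/\beta}$. Because $1/\beta \leq 1$, the MVT applied to $x \mapsto x^{1/\beta}$ yields $v - u \leq \beta^{-1} s^{1/\beta - 1}(t-s)$. Since $\alpha(1/\beta - 1) = \eta - \alpha$, raising to the power $\alpha$ gives
\begin{equation*}
|Y_t - Y_s| = |\tilde Y_v - \tilde Y_u| \leq \|\tilde Y\|_\alpha (v-u)^\alpha \lesssim s^{\eta - \alpha}(t-s)^\alpha,
\end{equation*}
which is exactly the singular H\"older bound.

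The main obstacle is the $(\Rightarrow)$ direction: the singular weight $u^{\beta(\eta-\alpha)}$ is anchored at the smaller argument, whereas the MVT estimate for $v^\beta - u^\beta$ naturally produces a factor in the larger argument, so the two bounds only combine cleanly when $u \asymp v$. Lemma \ref{lem:equality} is precisely the tool needed to remove this restriction and extend the H\"older control to all scales. The hypothesis $\eta > 0$ is essential since it is what allows the boundary value $\tilde Y_0$ to exist in the first place.
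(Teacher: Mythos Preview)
Your proof is correct. The $(\Leftarrow)$ direction is identical to the paper's: both apply the mean value theorem to the concave map $x\mapsto x^{1/\beta}$, obtaining $t^{1/\beta}-s^{1/\beta}\le \beta^{-1}s^{1/\beta-1}(t-s)$, and then use $\alpha(1/\beta-1)=\eta-\alpha$.

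For the $(\Rightarrow)$ direction your route genuinely differs from the paper's, and in fact repairs a gap there. The paper argues in one line
\[
|Y(t^\beta)-Y(s^\beta)|\le M\,s^{\beta(\eta-\alpha)}\Big(\beta\int_s^t r^{\beta-1}\,dr\Big)^{\alpha}\le M'\,s^{\beta(\eta-\alpha)}\,s^{(\beta-1)\alpha}|t-s|^{\alpha}=M'|t-s|^{\alpha},
\]
but since $\beta=\alpha/\eta\ge 1$ the integrand $r^{\beta-1}$ is nondecreasing, so the mean value bound is $\int_s^t r^{\beta-1}\,dr\le t^{\beta-1}(t-s)$, not $s^{\beta-1}(t-s)$; the resulting factor $(t/s)^{(\beta-1)\alpha}$ is unbounded over $0<s<t$. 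You identify exactly this obstruction: the Jacobian from the MVT and the singular weight only cancel in the comparable-arguments regime $v\le 2u$. Your remedy---proving the H\"older bound under the constraint $|v-u|\le u$ and then invoking Lemma~\ref{lem:equality} with $\eta=\delta=\alpha$ (so the weight $s^{\eta-\delta}$ is trivial) to extend to all scales---is the clean way to close this. Your separate dyadic argument for the existence of $\tilde Y_0$ and the bound $|\tilde Y_v-\tilde Y_0|\lesssim v^{\alpha}$ is correct, though once the full H\"older estimate holds on the open interval it also follows by passing to the limit.
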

\begin{proof}
We suppose that $Y\in  \CC^{\alpha,\eta}((0,T])$ and we adopt the notation $\beta=\alpha/\eta$. By hypothesis, {\color{black} there exist two  constants $M, M'>0$ such that for any $t>s>0$} one has 
\[\begin{split}
|Y(t^{\beta})-Y(s^{\beta})|&\leq M s^{\beta(\eta- \alpha)}|t^{\beta}- s^{\beta}|^{\alpha}= M \beta s^{\beta(\eta- \alpha)}\left| \int_s^tr^{\beta-1} dr\right|^{\alpha}\\&\leq  M' s^{\beta(\eta- \alpha)}s^{(\beta-1)\alpha}|t-s|^{\alpha}= M' |t-s|^{\alpha}\,.
\end{split}\]
We conclude that  the re-parametrisation can be uniquely extended to a $\alpha$-H\"older function on $[0,T]$. On the other hand, if the function  $t\to Y(t^{\beta})$ is $\alpha$-H\"older there exists a constant $C>0$ such that for any $0<s<t\leq T$ one has 
\[\begin{split}
|Y(t)-Y(s)|&\leq C|t^{1/\beta}- s^{1/\beta}|^{\alpha}= \frac{C}{\beta^{\alpha}}\left |\int_s^tr^{1/\beta-1}dr\right|^{\alpha}\leq \frac{C}{\beta^{\alpha}} s^{\alpha/\beta-\alpha} | t-s|^{\alpha}=\frac{C}{\beta^{\alpha}} s^{\eta-\alpha} | t-s|^{\alpha}\,.
\end{split}
\]
Diving both sides by $s^{\eta-\alpha} | t-s|^{\alpha}$, we conclude characterisation.
\end{proof}

\subsection{Improper Young integration} 
For any couple  $Y\in \CC^{\alpha_1}((0,T])$ and  $X\in \CC^{\alpha_2}((0,T])$ satisfying $\alpha_1+ \alpha_2>1$, we can define for any  $t>s>0$ the Young integral 
\[\int^t_{s}Y_rdX_r\,.\]
Following e.g. \cite[Chap.4]{FH2020} there exists a constant $C>0$ such that
\begin{equation}\label{eq:int_estimate}
\left\vert \int_s^t Y_r d X_r - Y_s(X_t-X_s) \right\vert \leq   C\| X \|_{\alpha_2; [s,t]} \| Y \|_{\alpha_{1}; [s,t]} |t -s|^{\alpha_1+ \alpha_2}\,,
\end{equation}
for every $T>t>s>0$. The following theorem establishes a sufficient criterion to extend this integral to $(0,T]$  by showing the existence the corresponding improper Young integral.
\begin{theorem}\label{thm:improper_young}
Let $Y\in \CC^{\alpha_1, \eta_1}((0,T])$ and  $X\in \CC^{\alpha_2, \eta_2}((0,T])$ such that $\alpha_1+ \alpha_2>1$. Provided the conditions $\eta_2>0$, $\eta_1+\eta_2>0$ and $\eta_1\neq 0$, one has the convergence
\begin{equation}\label{eq:improper_young}
\int^t_{0^+}Y_rdX_r:=\lim_{\eps\to 0^+}\int^t_{\eps}Y_rdX_r\,,
\end{equation}
for any $0<t\leq T$. We call the left-hand side of \eqref{eq:improper_young} the improper Young integral. As function of $t$ it belongs to $\CC^{\alpha_2, \eta}((0,T])$, where $\eta:=\eta_1\wedge 0+\eta_2>0 $. Moreover there exists a constant $M>0$ depending on $T$ and the previous parameters such that
\begin{equation}\label{thm:est_int}
\left\| \int^\cdot_{0^+}Y_rdX_r \right\|_{\alpha_2, \eta}\leq M(|Y_T|+ \|Y\|_{\alpha_1, \eta_1})\|X\|_{\alpha_2, \eta_2}\,.
\end{equation}
\end{theorem}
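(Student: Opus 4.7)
The plan is to exploit three ingredients that between them encode all the singular information needed: the classical Young estimate \eqref{eq:int_estimate} applied to intervals $[s,t]\subset(0,T]$; the explicit blow-up of the classical H\"older semi-norms given by Proposition \ref{prop:char_alphaeta_Holder}, namely $\|Y\|_{\alpha_1;[\varepsilon,T]}\leq \|Y\|_{\alpha_1,\eta_1}\varepsilon^{\eta_1-\alpha_1}$ (and similarly for $X$); and Lemma \ref{lem:equality}, which lets us reduce the $\CC^{\alpha_2,\eta}$-seminorm to the near-diagonal regime $|t-s|\leq s$.

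A preliminary substantive step is a pointwise a priori bound
\[
|Y_s|\leq \bigl(|Y_T|+C\|Y\|_{\alpha_1,\eta_1}\bigr)\bigl(1+s^{\eta_1\wedge 0}\bigr),\qquad s\in(0,T],
\]
which I would prove by telescoping $Y_s-Y_T$ along the geometric sequence $s_n=\min(2^n s,T)$. The $n$-th increment is bounded by $\|Y\|_{\alpha_1,\eta_1}s^{\eta_1}2^{n\eta_1}$, so the sum collapses to a geometric series with ratio $2^{\eta_1}$; its dominant term is $\sim T^{\eta_1}$ when $\eta_1>0$ and $\sim s^{\eta_1}$ when $\eta_1<0$. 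The hypothesis $\eta_1\neq 0$ enters here: at $\eta_1=0$ the series degenerates into a logarithm.

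For existence, I would show that $\varepsilon\mapsto\int_{\varepsilon}^t Y_r\,dX_r$ is Cauchy as $\varepsilon\downarrow 0$. Given $0<\varepsilon_1<\varepsilon_2\leq t$, I split $\int_{\varepsilon_1}^{\varepsilon_2}$ along the dyadic sequence $s_j=\min(2^j\varepsilon_1,\varepsilon_2)$, so that on each piece $|s_{j+1}-s_j|\leq s_j$. Applying \eqref{eq:int_estimate} together with the singular H\"older characterization yields
\[
\left|\int_{s_j}^{s_{j+1}}Y_r\,dX_r-Y_{s_j}(X_{s_{j+1}}-X_{s_j})\right|\lesssim \|Y\|_{\alpha_1,\eta_1}\|X\|_{\alpha_2,\eta_2}\,s_j^{\eta_1+\eta_2},
\]
while the linearisation $|Y_{s_j}(X_{s_{j+1}}-X_{s_j})|$ is controlled by the previous paragraph by $(|Y_T|+\|Y\|_{\alpha_1,\eta_1})\|X\|_{\alpha_2,\eta_2}\,s_j^{(\eta_1\wedge 0)+\eta_2}$. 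Summing in $j$, the two geometric series converge thanks to $\eta_1+\eta_2>0$ and $\eta=(\eta_1\wedge 0)+\eta_2>0$, giving a total bound of order $\varepsilon_2^{\eta}$ and hence the Cauchy property. This defines $I_t:=\int_{0^+}^t Y_r\,dX_r$.

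Finally, for the singular H\"older regularity of $I$, observe $I_t-I_s=\int_s^t Y_r\,dX_r$ for $0<s<t\leq T$. By Lemma \ref{lem:equality} it suffices to estimate this in the regime $|t-s|\leq s$: there $|t-s|^{\alpha_1}\leq s^{\alpha_1}$ converts the remainder of \eqref{eq:int_estimate} into $\|Y\|_{\alpha_1,\eta_1}\|X\|_{\alpha_2,\eta_2}\,s^{\eta_1+\eta_2-\alpha_2}|t-s|^{\alpha_2}$, while the a priori bound on $|Y_s|$ controls $|Y_s(X_t-X_s)|$ by $(|Y_T|+\|Y\|_{\alpha_1,\eta_1})\|X\|_{\alpha_2,\eta_2}\,s^{(\eta_1\wedge 0)+\eta_2-\alpha_2}|t-s|^{\alpha_2}$. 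Both exponents dominate $s^{\eta-\alpha_2}|t-s|^{\alpha_2}$, yielding the claimed $\CC^{\alpha_2,\eta}$-regularity together with the quantitative estimate \eqref{thm:est_int}. The main obstacle is the first step: bundling $\eta_1>0$ and $\eta_1<0$ into a single clean a priori bound is exactly what forces the exponent $\eta=(\eta_1\wedge 0)+\eta_2$ in the conclusion, and is also why $\eta_1=0$ must be excluded.
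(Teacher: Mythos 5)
Your proof is correct and follows essentially the same skeleton as the paper: an a priori bound on $|Y_s|$ via dyadic telescoping (the paper does this at the discrete points $2^{-n}$; you state the continuous version), and a Cauchy estimate for $\int_\varepsilon^t Y\,dX$ over a dyadic decomposition, both hinging on the blow-up rates from Proposition \ref{prop:char_alphaeta_Holder} and the hypotheses $\eta_1+\eta_2>0$, $\eta_2>0$, $\eta_1\neq 0$.

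Where you genuinely depart is the final step, proving $t\mapsto\int_{0^+}^t Y\,dX\in\CC^{\alpha_2,\eta}$. The paper goes through the characterization \eqref{prop:ident_norms}, restricting to dyadic $\varepsilon$ and bounding $\|\cdot\|_{\alpha_2;[2^{-n},1]}$ by summing contributions over blocks $[2^{-m},2^{-m+1}]$; this works but requires one to estimate the full H\"older semi-norm of the integral on each block (not just the endpoint increments $|I_m-I_{m-1}|$, as the displayed chain of inequalities in the paper's proof might suggest at a glance). Your route instead invokes Lemma \ref{lem:equality} with $\alpha=\delta=\alpha_2$ — applicable since $\eta=(\eta_1\wedge 0)+\eta_2\leq\eta_2\leq\alpha_2$ — to reduce the $\CC^{\alpha_2,\eta}$ semi-norm to the near-diagonal regime $|t-s|\leq s$, where \eqref{eq:int_estimate} together with the a priori bound on $|Y_s|$ gives the required $s^{\eta-\alpha_2}|t-s|^{\alpha_2}$ bound directly (using $|t-s|^{\alpha_1}\leq s^{\alpha_1}$ to absorb the extra factor, and $s\leq T$ to convert $s^{\eta_1+\eta_2-\alpha_2}$ to a constant times $s^{\eta-\alpha_2}$ when $\eta_1>0$). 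This avoids the block-summation and the subadditivity bookkeeping entirely, so it is arguably the cleaner way to close the argument; the trade-off is that it uses Lemma \ref{lem:equality}, which the paper proves but does not invoke in this theorem's proof.
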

\begin{remark}
Before giving the proof, we stress the optimality of the conditions via the example $Y:t \mapsto t^{\eta_1}$ and $X: t \mapsto t^{\eta_2}$. In this case, $\int_s^T Y dX =  \text{(const)} T^{\eta_1 + \eta_2}-s^{\eta_1 + \eta_2}$ and the {\color{black} $\lim_{s\to 0^+}$} exists iff $\eta_1+\eta_2>0$. On the other hand, taking $Y \equiv 1$ shows that $X$ must extend continuously from $(0,T]$ to $[0,T]$, which is captured by the condition $\eta_2 > 0$. { \color{black} We also remark that the condition $\eta_1+\eta_2>0$ allows $\eta_1$ to be strictly negative, which is the non-trivial part of this result. Indeed if $\eta_1>0$ we can
use Proposition \ref{prop:char_alphaeta_Holder}  to conclude that $X, Y$ have appropriate $p$-variation regularity and the result will follow from classical Young integration in the $p$-variation setting.}
\end{remark}

\begin{proof}
It is sufficient to prove the convergence \eqref{eq:improper_young} when $t=T=1$. In order to show the convergence to some value, we will firstly prove that $I_n := \int_{2^{-n}}^1 Y dX$ is a Cauchy sequence. By means of the notation $Y^n=Y_{2^{-n}}$, we apply the inequality \eqref{eq:int_estimate} with $s=2^{-n}, t = 2^{-n+1}$ obtaining
\begin{equation}\label{eq:thm_est_int1}
\begin{split}
|I_n- I_{n-1}|\leq |Y^{n}| \| X \|_{\alpha_2; [2^{-n},2^{-n+1}]}2^{-n\alpha_2} +  C \| X \|_{\alpha_2; [2^{-n},2^{-n+1}]} \| Y \|_{\alpha_{1}; [2^{-n},2^{-n+1}]} 2^{-n(\alpha_1+ \alpha_2)}\,.
\end{split}
\end{equation}
{\color{black}Thanks} to Proposition \ref{prop:char_alphaeta_Holder}, we can derive some bounds of each quantity in the right-hand side above. Iterating the triangle inequality, there exists a constant $C'>0$ such that  for every $n \geq 1$
\[
\begin{split}
|Y^{n}|&\leq \sum_{m=1}^{n}|Y^{m}- Y^{m-1}|+ |Y^0|\leq \| Y\|_{\alpha_1, \eta_1}\sum_{m=1}^{n} 2^{-m \eta_1} + |Y_1|\leq C'(2^{-n(\eta_1\wedge 0)}\| Y\|_{\alpha_1, \eta_1}) +|Y_1|\, {\color{black} .}
\end{split}
\]
Plugging this estimate in \eqref{eq:thm_est_int1}, there exists a constant $c>0$ such that
\begin{equation}\label{proof:improper_young5}
\begin{split}
|I_n- I_{n-1}|&\leq  \|X\|_{\alpha_2, \eta_2}\bigg(C\| Y\|_{\alpha_1, \eta_1} (C'2^{-n(\eta_1\wedge 0+\eta_2)}+ 2^{-n(\eta_1+ \eta_2)}) +|Y_1|2^{-n\eta_2}\bigg)\\&\leq c\|X\|_{\alpha_2, \eta_2} (\|Y\|_{\alpha_1, \eta_1} +|Y_1|)2^{-n\eta}\,.
\end{split}
\end{equation}
Then the conditions in the statement guarantee that $I_n$ is a Cauchy sequence converging to some limit $I$ such that for every $n\geq 0$
\begin{equation}\label{proof:improper_young3}
| I - \int^1_{2^{-n}}Y_rdX_r|\leq c\|X\|_{\alpha_2, \eta_2} (\|Y\|_{\alpha_1, \eta_1} +|Y_1|)2^{-n\eta}\,,\end{equation} Let us prove the convergence of the whole sequence in the right-hand side of \eqref{eq:improper_young}. For any fixed $\eps\in (0,1]$, we take $N$ such that $2^{-N}< \eps\leq 2^{-N+1} $  and applying the estimate \eqref{eq:int_estimate} with $s=2^{-N}$ and $t=\eps$, one has 
\[ \begin{split}
\left\vert \int^{\eps}_{2^{-N}}Y_rdX_r\right\vert\leq& |Y^{N}| \| X \|_{\alpha_2; [2^{-N},\eps]}| \eps -2^{-N}|^{\alpha_2} +  C\| X \|_{\alpha_2; [2^{-N},\eps]} \| Y \|_{\alpha_{1}; [2^{-N},\eps]} | \eps -2^{-N}|^{\alpha_1+\alpha_2}\\\leq & |Y^{N}| \| X \|_{\alpha_2; [2^{-N},2^{-N+1}]}2^{-N\alpha_2} +  C\| X \|_{\alpha_2; [2^{-N},2^{-N+1}]} \| Y \|_{\alpha_{1}; [2^{-N},2^{-N+1}]} 2^{-N(\alpha_1+ \alpha_2)}\,.
\end{split}\]
Then, we can repeat exactly the calculations written above to obtain the estimate
\begin{equation}\label{proof:improper_young4}
\left\vert \int^{\eps}_{2^{-N}}Y_rdX_r\right\vert\leq c \|X\|_{\alpha_2, \eta_2} (\|Y\|_{\alpha_1, \eta_1} +|Y_1|)2^{-N\eta}\leq c \|X\|_{\alpha_2, \eta_2} (\|Y\|_{\alpha_1, \eta_1} +|Y_1|) \eps^{\eta}.
\end{equation} 
Combining the two inequalities \eqref{proof:improper_young3} and \eqref{proof:improper_young4}, we obtain the convergence \eqref{eq:improper_young}. Finally let us show that $\int_{0^{+}}^{\cdot}Y_r dX_r$ belongs to $\CC^{\alpha_2, \eta}$. The case $\eta= \alpha_2$ follows trivially from \eqref{proof:improper_young3}. When $\eta<\alpha_2$ we use the characterisation \eqref{prop:ident_norms} obtaining 
\[\left\Vert\int_{0^+}^{\cdot} Y_r d X_r\right\Vert_{\alpha_2, \eta}= \sup_{\eps>0}\left\Vert\int_{0^+}^{\cdot} Y_r d X_r\right\Vert_{\alpha_{2}; [\eps,1]}\eps^{\alpha_2- \eta}\leq \sup_{n\geq 1}\left\Vert\int_{0^+}^{\cdot} Y_r d X_r\right\Vert_{\alpha_{2}; [2^{-n},1]}2^{-n(\alpha_2- \eta)}\,.\]
By means of the subadditivity of the H\"older semi-norm and the inequality \eqref{proof:improper_young5}, we obtain for any $n\geq 1$
\[
\begin{split}
\left\Vert\int_{0^+}^{\cdot} Y_r d X_r\right\Vert_{\alpha_{2}; [2^{-n},1]}&\leq \sum_{m=1}^n\left\Vert\int_s^t Y_r d X_r\right\Vert_{\alpha_{2}; [2^{-m},2^{-m+1}]}\leq \sum_{m=1}^n|I_m- I_{m-1}|2^{m\alpha_{2}}\\&\leq c\|X\|_{\alpha_2, \eta_2} (\|Y\|_{\alpha_1, \eta_1} +|Y_1|)\sum_{m=1}^n2^{-m(\eta-\alpha_{2})}\\&\leq  c'\|X\|_{\alpha_2, \eta_2} (\|Y\|_{\alpha_1, \eta_1} +|Y_1|)2^{-n(\eta-\alpha_{2})}\,,
\end{split}
\]
for some constant $c'>0$. Therefore $\left\Vert\int_{0^+}^{\cdot} Y_r d X_r\right\Vert_{\alpha_2, \eta}$ is finite and one has the inequality \eqref{thm:est_int}. The general case when $T$ and $t$ are arbitrary follows from the scaling properties of H\"older semi-norms.
\end{proof}

\begin{remark}
As a special case of interest, for $X\in \CC^{\alpha}[0,T]$, $Y\in \CC^{\alpha, \eta}(0,T]$ and  $\alpha>1/2$, we identify $\CC^{\alpha}[0,T]$ with $\CC^{\alpha, \alpha}(0,T] $ and Theorem \ref{thm:improper_young} yields the improper Young integral provided $\eta + \alpha > 0$. 
\end{remark}

\subsection{Singular Besov paths}
It is known that paths of fractional Besov regularity, $X \in W^{\delta ,q}([0,T])$ enjoy $\alpha$-H\"older regularity provided $\alpha = \delta - 1/q > 0$ and also finite $p$-variation with $p = 1/\delta$ (see e.g. \cite{FV06var}). Let us introduce a singular version of this space.
\begin{definition}\label{defn_singular_Besov}
Let $0 < \delta <  1$, $q\geq 1$ such that $\alpha=\delta - 1/q>0$ and $\eta \le \alpha$. We say that a continuous path $Y\colon (0,T]\to \R$ belongs to the \emph{singular Besov space} $ W^{\delta ,q, \eta}((0,T])$  if it satisfies
\begin{equation}\label{defn_besov_norm}
\left\Vert Y \right\Vert _{\delta,q,\eta}=\left( \int_{\{ 0 < s < t < T \}} \frac{\left\vert Y_{t}-Y_{s}\right\vert ^{q}}{ s^{q(\eta - \alpha)} \left\vert t-s\right\vert^{1+\delta q}}dsdt\right) ^{1/q} <\infty\;.
\end{equation}
\end{definition}
Out interest in singular Besov path spaces comes from the following embedding result.
\begin{proposition}\label{prop:char_alphaeta_Sobolev_Holder}
We have the continuous embedding
\begin{equation}\label{sing_besov_embedding}
 W^{\delta ,q, \eta}((0,T]) \subset C^{\alpha, \eta} ((0,T])\,.
\end{equation}
\end{proposition}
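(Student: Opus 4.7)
The plan is to derive the embedding from the classical Garsia--Rodemich--Rumsey (GRR) inequality applied on intervals of the form $[a,2a]\subset(0,T]$, in conjunction with Lemma~\ref{lem:equality}.

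First, by Lemma~\ref{lem:equality} it is enough to bound the quotient $|Y_t-Y_s|/(s^{\eta-\alpha}|t-s|^{\alpha})$ over pairs satisfying the extra constraint $|t-s|\leq s$; any such pair is contained in the block $[a,b]=[s,\min(2s,T)]$. On such an interval, the classical GRR inequality (for the given $\delta,q$ with $\alpha=\delta-1/q>0$) reads, for all $u,v\in[a,b]$,
\[
|Y_u-Y_v|\;\lesssim\;|u-v|^{\alpha}\left(\int_a^{b}\!\!\int_a^{b}\frac{|Y_{u'}-Y_{v'}|^q}{|u'-v'|^{1+\delta q}}\,du'\,dv'\right)^{1/q}.
\]
So it suffices to dominate the local unweighted Besov seminorm on $[a,b]$ by the singular weighted one from \eqref{defn_besov_norm}.

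This is where the assumption $\eta\leq\alpha$ is used: the exponent $\eta-\alpha$ is nonpositive, so $r\mapsto r^{q(\eta-\alpha)}$ is nonincreasing, giving $(u'\wedge v')^{q(\eta-\alpha)}\leq a^{q(\eta-\alpha)}$ throughout $[a,b]^2$. Inserting the trivial factorisation of the integrand and using this bound produces
\[
\int_a^{b}\!\!\int_a^{b}\frac{|Y_{u'}-Y_{v'}|^q}{|u'-v'|^{1+\delta q}}\,du'\,dv'\;\leq\;2\,a^{q(\eta-\alpha)}\,\|Y\|_{\delta,q,\eta}^{q}.
\]
Specialising to $a=s$, combining with the GRR bound, and setting $u=s,v=t$ yields $|Y_t-Y_s|\lesssim s^{\eta-\alpha}|t-s|^{\alpha}\|Y\|_{\delta,q,\eta}$ whenever $|t-s|\leq s$. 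Lemma~\ref{lem:equality} then removes this restriction at the cost of a fixed multiplicative constant, delivering $\|Y\|_{\alpha,\eta}\lesssim\|Y\|_{\delta,q,\eta}$, which is the asserted continuous embedding.

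The substantive ingredient is the classical GRR lemma itself; the only point that genuinely reflects the singular structure is the sign condition $\eta-\alpha\leq 0$, which is what allows the weight $s^{q(\eta-\alpha)}$ inside the singular Besov norm to be dominated by its \emph{minimum} value on the block $[a,2a]$ rather than its maximum. This is precisely the mechanism that makes a dyadic-localisation argument compatible with the weighted H\"older seminorm, and it also explains why the method degenerates in the excluded regime $\eta>\alpha$.
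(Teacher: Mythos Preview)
Your argument is correct, but it takes a different route from the paper's. You localise to dyadic blocks $[s,2s]$, apply GRR there, and then invoke Lemma~\ref{lem:equality} to remove the restriction $|t-s|\le s$. The paper instead works globally on $[\varepsilon,T]$: it applies the classical Besov--H\"older embedding on that interval, bounds the unweighted Besov seminorm on $[\varepsilon,T]$ by $\varepsilon^{\eta-\alpha}\|Y\|_{\delta,q,\eta}$ (same monotonicity trick you use, but on the whole interval rather than a block), and then appeals to Proposition~\ref{prop:char_alphaeta_Holder} to conclude. The paper's route is shorter and yields the sharp inequality $\|Y\|_{\alpha,\eta}\le\|Y\|_{\delta,q,\eta}$ with constant $1$, whereas your dyadic localisation plus Lemma~\ref{lem:equality} introduces an unspecified constant. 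On the other hand, your block-based argument would adapt more readily to settings where only local embeddings are available.

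One small slip in your final paragraph: you write that the weight $s^{q(\eta-\alpha)}$ is ``dominated by its \emph{minimum} value on the block''. Since $q(\eta-\alpha)\le 0$, the function $r\mapsto r^{q(\eta-\alpha)}$ is nonincreasing, so $a^{q(\eta-\alpha)}$ is actually its \emph{maximum} on $[a,2a]$; this is exactly what you need (and what your displayed inequality correctly uses), but the prose description is reversed.
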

\begin{proof}
The proof is similar to Proposition \ref{prop:char_alphaeta_Holder}. Indeed, for every $\varepsilon \le s < t < T$ have, by the usual Besov
embedding, with $\alpha = \delta - 1 / q > 0$, and using $\eta \le \alpha$ in the estimation below, 
\begin{eqnarray*}
  | Y_t - Y_s | & \le & 
  \left( \int_{\{ \varepsilon < s < t < T \}} \frac{\left\vert Y_{t}-Y_{s}\right\vert ^{q}}{\left\vert t-s\right\vert^{1+\delta q}}dsdt\right) ^{1/q}
   | t - s |^{\alpha} \\
   & \le &  \varepsilon^{\eta -\alpha}  \left( \int_{\{ \varepsilon < s < t < T \}} \frac{\left\vert Y_{t}-Y_{s}\right\vert ^{q}}{ s^{q(\eta - \alpha)} \left\vert t-s\right\vert^{1+\delta q}}dsdt\right) ^{1/q}
   | t - s |^{\alpha} \\
  & \le & \varepsilon^{\eta -\alpha} \| Y \|_{\delta, q, \eta} | t - s
  |^{\alpha}
\end{eqnarray*}
In particular, we see $\| Y \|_{\alpha ; [\varepsilon, T]} \le
\varepsilon^{\eta - \alpha} \| Y \|_{\delta, q, \eta}$ and hence $\| Y
\|_{\alpha, \eta} \le \| Y \|_{\delta, q, \eta}$, using Proposition \ref{prop:char_alphaeta_Holder}.
\end{proof}
\begin{remark}
We note a notational clash, (classical) Besov $\| Y \|_{\delta,q}$ vs. singular H\"older $\| Y \|_{\alpha, \eta}$, but will not try to resolve this for the simple reason that such Besov spaces will 
not play a central role in this paper. {\color{black} Definition} \ref{defn_singular_Besov} is done for real-valued paths and we can trivially extend it when $Y$ takes value in the complex upper half-plane $\mathbb{H}$ (see Section \ref{SLE_sec}).
\end{remark}

\section{Singular rough path spaces} \label{sec:singRPS}

{\color{black} We extend the previous on Young integration in Theorem \ref{thm:improper_young} to usual rough integration with respect to a specific two-level family of rough paths.} In what follows all the definitions are given with respect to two fixed values $\alpha,\beta\in (0,1)$ such that $\alpha+2\beta > 1$.
\begin{definition}\label{defn_rough_path}
An inhomogeneous rough path is a triple $\bX=(X,\hat X, \XX)$ where $X\in \CC^{\alpha}([0,T])$, $\hat{X}\in \CC^{\beta}([0,T])$ and $\XX\in \CC^{\alpha+ \beta}([0,T]^2)$ satisfies the Chen relation
\[
\XX_{s,t}= \XX_{s,u}+ \XX_{u,t}+ (\hat{X}_u-\hat{X}_s)(X_t-X_u)\,.\]
We denote the set of inhomogeneous rough paths by $\Cr([0,T])$. 
\end{definition}

For any given inhomogeneous rough path we define the corresponding space of paths which can be integrated against it.

\begin{definition}
Let $\bX\in \Cr([0,T])$ and $\gamma\in (0,1)$ such that $\alpha+\beta+ \gamma > 1$. An inhomogeneous controlled rough path is given by a couple $(Y, Y')$ where $Y\in \CC^{\beta}([0,T])$, $Y' \in \CC^{\gamma}([0,T])$ and the remainder $R$, given by 
\begin{equation}\label{remainder}
Y_t = Y_s+ Y'_s (\hat{X}_t-\hat{X}_s) + R^Y_{s,t}\,,
\end{equation}
satisfies $R^Y \in \CC^{\gamma + \beta}([0,T]^2)$.  We denote the set of inhomogeneous controlled rough paths by $ \D_{\hat{X}}^{\gamma+\beta}([0,T])$. {\color{black} If the remainder $R^Y$ is taken with respect to $X$ and $R^Y \in \CC^{\gamma + \alpha}([0,T]^2)$, we also say that $(Y,Y')$ is an inhomogeneous controlled rough path and we denote the associated space by $\D_{X}^{\gamma+\alpha}([0,T])$}.
\end{definition}
We introduce their corresponding  singular version.

\begin{definition}\label{def_sing_inhomogeneous}
Let $\eta\leq \alpha+ \beta$. We say that  $\bX=(X,\hat{X},\XX)$, defined on $(0,T]$ is a singular inhomogeneous rough path, in symbols $\bX \in \Cr_{\eta}((0,T])$,  if it satisfies
\[
\|\bX\|_{\eta}:=\sup_{0 < s < t \le T} \frac{|X_t - X_s|}{s^{\eta -(\alpha+ \beta)}|t-s|^{\alpha}}+\sup_{0 < s < t \le T} \frac{|\hat{X}_t - \hat{X}_s|}{s^{\eta -(\alpha+ \beta)}|t-s|^{\beta}} +   \sup_{0 < s < t \le T} \frac{|\XX_{s,t}|}{s^{\eta -(\alpha+ \beta)}|t-s|^{\alpha+ \beta}} < \infty
\]
\end{definition}
\begin{definition}\label{def_sing_inhomogeneous_contr}
Let $\gamma\in (0,1)$ such that $\alpha+\beta+ \gamma > 1$ {\color{black} and} $\eta\leq \gamma+ \beta$. We call $(Y,Y')$ a singular  controlled rough path, in symbols $(Y, Y') \in \D_{\hat{X}}^{\gamma+ \beta, \eta}((0,T])$  if it satisfies
\[
\| Y,Y' \|_{\gamma+\beta,\eta} := \sup_{0 < s < t \le T} \frac{|Y'_t - Y'_s|}{s^{\eta -(\gamma+ \beta)}|t-s|^{\gamma}} +   \sup_{0 < s < t \le T} \frac{|R^Y_{s,t}|}{s^{\eta -(\gamma+ \beta)}|t-s|^{\gamma+ \beta}} < \infty\,.
\]
{\color{black}If $R^Y$ is taken with respect to $X$ and we replace $\beta$ with $\alpha$, we denote the related space by $\D_{X}^{\gamma+ \alpha, \eta}((0,T])$.}
\end{definition}
Using the intrinsic norms of inhomogeneous rough paths and controlled rough paths\footnote{They are defined as $\| Y,Y' \|_{\gamma+ \beta,\eta} $ and $\|\bX\|_{\eta}$ without the weight $s^{\eta- (\alpha+ \beta)}$ or $s^{\eta-( \gamma+\beta)}$, see \cite{FH2020}}, one can obtain the same type of equivalences given in Proposition \ref{prop:char_alphaeta_Holder}.
\begin{proposition}
Let {\color{black} $\bX\in \Cr_{\eta}((0,T])$}  and $Y\in \D_{\hat{X}}^{\gamma+ \beta, \eta}((0,T])$. Considering the quantities $\| \bX\|_{[\eps,T]}$ and  $\| Y,Y' \|_{\gamma+ \beta; [\eps, T]} $  one has the same results of Proposition \ref{prop:char_alphaeta_Holder} obtaining 
\begin{equation}\label{eq:equivalence-rough}
 \|Y, Y'\|_{\gamma+ \beta, \eta}=\sup_{0<\eps\leq T}\| Y,Y' \|_{\gamma+ \beta; [\eps, T]}\eps^{\gamma+ \beta- \eta}\,,  \quad\| \bX\|_{\eta}= \sup_{0<\eps\leq T} \| \bX\|_{ [\eps,T]}\eps^{\alpha+\beta- \eta}\,.
\end{equation}
\end{proposition}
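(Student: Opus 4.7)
The plan is to reduce the statement to Proposition \ref{prop:char_alphaeta_Holder} applied separately to each component of $\bX$ and of $(Y,Y')$. The key observation is that all terms in $\|\bX\|_\eta$ carry the common weight $s^{\eta-(\alpha+\beta)}$, and all terms in $\|Y,Y'\|_{\gamma+\beta,\eta}$ carry the weight $s^{\eta-(\gamma+\beta)}$. Since $\eta\le \alpha+\beta$ (resp.\ $\eta\le \gamma+\beta$), each weight is a decreasing function of $s$, and this monotonicity is the only analytic input used in the proof of Proposition \ref{prop:char_alphaeta_Holder}.

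For the rough path identity, I would treat the three contributions $X$, $\hat X$, $\XX$ in parallel. For the inequality $\sup_\eps \|\bX\|_{[\eps,T]}\eps^{\alpha+\beta-\eta}\le \|\bX\|_\eta$, fix $\eps\in (0,T]$ and any $\eps\le s<t\le T$: monotonicity gives $s^{\eta-(\alpha+\beta)}\le \eps^{\eta-(\alpha+\beta)}$, so that each of $|X_t-X_s|/|t-s|^\alpha$, $|\hat X_t-\hat X_s|/|t-s|^\beta$ and $|\XX_{s,t}|/|t-s|^{\alpha+\beta}$ is dominated by $\eps^{\eta-(\alpha+\beta)}$ times the corresponding summand of $\|\bX\|_\eta$. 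Taking sup in $s,t\in[\eps,T]$ and then in $\eps$ yields the bound. For the converse, fix $0<s<t\le T$, take $\eps=s$ in $\|\bX\|_{[\eps,T]}\eps^{\alpha+\beta-\eta}$, and observe that each of the three quotients above already appears as a summand of $\|\bX\|_{[s,T]}$; dividing by $s^{\eta-(\alpha+\beta)}$ and taking the supremum over $s,t$ reconstructs $\|\bX\|_\eta$ on the left.

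For the controlled rough path identity I would run exactly the same argument with $(Y',R^Y)$ in place of $(X,\hat X,\XX)$, using $\gamma$ and $\gamma+\beta$ as the H\"older exponents and the common weight $s^{\eta-(\gamma+\beta)}$. There is no genuine obstacle in either case: the entire content is componentwise monotonicity plus the two-sided comparison already carried out in Proposition \ref{prop:char_alphaeta_Holder}. The only book-keeping point is that in passing from componentwise identities to the displayed identities \eqref{eq:equivalence-rough} one implicitly uses that $\sup_\eps$ distributes across the summands in the semi-norms (or equivalently reads $+$ as $\max$ throughout, which only changes the semi-norms by a universal constant).
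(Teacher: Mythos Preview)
Your approach is exactly what the paper intends: its proof reads in full ``The proof follows in the same way as Proposition~\ref{prop:char_alphaeta_Holder}'', and you have spelled out precisely that componentwise reduction using the monotonicity of the common weight. Your final remark about $\sup_\eps$ not distributing over sums is well taken and is in fact a point the paper glosses over; if one insists on the exact equalities in \eqref{eq:equivalence-rough} one should indeed read the semi-norms with $\max$ rather than $+$ (otherwise one only gets equivalence up to the factor $3$, resp.\ $2$), but this has no bearing on any subsequent use of the proposition.
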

\begin{proof}
The proof follows in same way as Proposition \ref{prop:char_alphaeta_Holder}.
\end{proof}
From \eqref{eq:equivalence-rough} one has immediately  the identity $\Cr_{\alpha+ \beta}((0,T])= \Cr([0,T])$ moreover one has the continuous embedding
\begin{equation}\label{eq:trivial_properties}
\D_{\hat{X}}^{\gamma'+ \beta, \eta}((0,T])\subset\D_{\hat{X}}^{\gamma+ \beta, \eta}((0,T])\,, \quad\text{if $\gamma'\geq \gamma$}\,.
\end{equation}
Similarly to the Young integral, for any  $(Y, Y')\in \cD^{\gamma+ \beta}_{\hat{X}}((0,T])$ and  $\bX\in \Cr((0,T])$  we can define (see \cite[Chap.4]{FH2020}) for any  $t>s>0$ the rough integral
\[\int_s^t Y_r d \bX_r\,.\] 
This function belongs to $\CC^{\alpha}((0,T])$ and it satisfies the inequality
\begin{equation}\label{eq:sewing_rough}
\left\vert \int_s^t Y_r d \bX_r- Y_s(X_t-X_s) -Y'_s\XX_{s,t}\right\vert \leq   C\| \bX\|_{ [s,t]}\|Y, Y'\|_{\gamma+ \beta; [s,t]}|t -s|^{\gamma+ \beta+ \alpha}\,,
\end{equation}
for some fixed constant $C>0$. We describe some sufficient conditions to show the existence of an improper rough integral in this case.
{\color{black}
\begin{theorem}\label{thm:improper_rough}
Let $Y\in\D_{\hat{X}}^{\gamma+\beta, \eta_1}((0,T])$ and  $\bX\in \Cr_{\eta_2}((0,T])$ such that $\alpha+\beta + \gamma > 1$. Provided the conditions $\eta_2>\beta \vee \alpha$, $2(\eta_2- \beta) +\eta_1- \alpha>0$ and $\eta_1\neq 0, \beta$ one has the convergence for any $0<t\leq T$
\begin{equation}\label{eq:convergence_integral}
Z_t:=\int^t_{0^+}Y_rd\bX_r:=\lim_{\eps\to 0^+}\int^t_{\eps}Y_rd\bX_r\,.
\end{equation}
We call the left-hand side of {\color{black}\eqref{eq:convergence_integral} the improper rough integral.} The path $Z$ belongs to $\CC^{\alpha, \eta}((0,T])$ and the couple $(Z, Y)$ belongs to $\cD^{ \beta+\alpha, \eta}_X((0,T])$, where $\eta:=\eta_2-\beta+(\eta_2- \alpha+(\eta_1 -\beta)\wedge 0 )\wedge 0$. Moreover, there exists a constant $M>0$ depending on $T$ and the previous parameters such that
\begin{equation}\label{eq:thm_1}
\| Z\|_{\alpha, \eta}\leq M(\|\bX\|_{\eta_2}\vee \|\bX\|_{\eta_2}^2)(|Y_T|+ |Y'_T|+ \| Y,Y' \|_{\gamma+ \beta,\eta})\,,
\end{equation}
\begin{equation}\label{eq:thm_2}
\left\|  Z, Y\right\|_{\alpha +\beta, \eta}\leq M(\|\bX\|_{\eta_2}\vee \|\bX\|_{\eta_2}^2)( |Y'_T|+ \| Y,Y' \|_{\gamma+ \beta,\eta})\,.
\end{equation}
\end{theorem}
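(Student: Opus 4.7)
The plan is to mimic the proof of Theorem \ref{thm:improper_young} with the rough sewing bound \eqref{eq:sewing_rough} replacing the Young one. By scaling we may assume $T=t=1$. Set $I_n := \int_{2^{-n}}^1 Y_r\, d\bX_r$, $Y^n := Y_{2^{-n}}$, $Y'^n := Y'_{2^{-n}}$, and apply \eqref{eq:sewing_rough} on $[2^{-n}, 2^{-n+1}]$ to obtain
\begin{equation*}
|I_n - I_{n-1}| \le |Y^n|\,|X^{n-1}-X^n| + |Y'^n|\,|\XX_{2^{-n},2^{-n+1}}| + C\,\|\bX\|_{[2^{-n},2^{-n+1}]}\,\|Y,Y'\|_{\gamma+\beta;[2^{-n},2^{-n+1}]}\,2^{-n(\alpha+\beta+\gamma)}.
\end{equation*}
By the rough-path analogue of Proposition \ref{prop:char_alphaeta_Holder} given in \eqref{eq:equivalence-rough}, each localised seminorm on $[2^{-n},2^{-n+1}]$ is bounded by the corresponding singular norm times an appropriate power of $2^n$, e.g.\ $\|\bX\|_{[2^{-n},2^{-n+1}]} \lesssim \|\bX\|_{\eta_2}\, 2^{-n(\eta_2-\alpha-\beta)}$ and $\|Y,Y'\|_{\gamma+\beta;[2^{-n},2^{-n+1}]} \lesssim \|Y,Y'\|_{\gamma+\beta,\eta_1}\, 2^{-n(\eta_1-\gamma-\beta)}$.

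For the pointwise factors I telescope as in the Young proof: from the $\gamma$-H\"older singular estimate on $Y'$ one gets $|Y'^n - Y'^{n-1}| \lesssim \|Y,Y'\|_{\gamma+\beta,\eta_1}\, 2^{-n(\eta_1-\beta)}$, hence $|Y'^n| \lesssim |Y'_T| + \|Y,Y'\|_{\gamma+\beta,\eta_1}\, 2^{-n((\eta_1-\beta)\wedge 0)}$, the exclusion $\eta_1 = \beta$ being precisely what prevents a forbidden logarithm. Using the controlled expansion $Y^{n-1}-Y^n = Y'^n(\hat X^{n-1}-\hat X^n) + R^Y_{2^{-n},2^{-n+1}}$ and a second telescoping, in which the product $|Y'^n|\cdot|\hat X^{n-1}-\hat X^n|$ is the source of the quadratic $\|\bX\|_{\eta_2}\vee\|\bX\|_{\eta_2}^2$ appearing in \eqref{eq:thm_1}--\eqref{eq:thm_2}, one produces an analogous bound for $|Y^n|$; here the exclusion $\eta_1\neq 0$ removes another potential logarithm coming from the $|R^Y|$ contributions. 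Substituting everything into the dyadic inequality, the sewing term is $\OO(2^{-n(\eta_1+\eta_2-\beta)})$, the $Y'^n\XX$ term is $\OO(2^{-n(\eta_2+(\eta_1-\beta)\wedge 0)})$, and the $Y^n(X^{n-1}-X^n)$ term is $\OO(2^{-n\eta})$ with $\eta$ as in the statement. The hypotheses $\eta_2>\alpha\vee\beta$ and $2(\eta_2-\beta)+\eta_1-\alpha>0$ are exactly what is needed to make each of these three exponents strictly positive, so $(I_n)$ is Cauchy with geometric rate. Filling in an arbitrary $\varepsilon\in[2^{-N},2^{-N+1}]$ by one more application of \eqref{eq:sewing_rough} on $[2^{-N},\varepsilon]$, in complete analogy with \eqref{proof:improper_young4}, gives the convergence \eqref{eq:convergence_integral}.

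It remains to check the regularity of $Z$ and the controlled structure $(Z,Y)\in\cD^{\alpha+\beta,\eta}_X((0,T])$. By \eqref{eq:equivalence-rough} the bound \eqref{eq:thm_1} reduces to controlling $\|Z\|_{\alpha;[2^{-n},T]}$ by $2^{-n(\eta-\alpha)}$ up to the advertised constants, which follows by summing the dyadic estimates on $|I_m - I_{m-1}|$ and invoking subadditivity of the H\"older seminorm, exactly as in the closing display of the proof of Theorem \ref{thm:improper_young}. For the remainder $R^Z_{s,t} := Z_t - Z_s - Y_s(X_t - X_s)$, the rough sewing identity gives $R^Z_{s,t} = Y'_s\XX_{s,t} + \OO(|t-s|^{\alpha+\beta+\gamma})$ on any $[s,t]\subset(0,T]$; inserting the singular bounds for $Y'$, $\XX$ and the sewing error in the diagonal regime $|t-s|\le s$, and extending to the general regime by Lemma \ref{lem:equality}, produces \eqref{eq:thm_2}.

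The main obstacle is the bookkeeping: unlike the Young case there are now three competing dyadic contributions whose decay rates depend non-trivially on the sign of $\eta_1-\beta$, and the precise exponent $\eta$ in the statement emerges from the $|Y^n(X^{n-1}-X^n)|$ term once the growth of $|Y^n|$ itself has been propagated in terms of $\eta_1$ through a second telescoping. The exclusion $\eta_1\notin\{0,\beta\}$ is exactly what rules out logarithmic divergences in the two telescoping sums, and the presence of $\XX$ (with its own singular exponent $\eta_2-(\alpha+\beta)$) is what forces the estimates to be quadratic in $\|\bX\|_{\eta_2}$.
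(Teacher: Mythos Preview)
Your overall strategy matches the paper's: dyadic sewing on $[2^{-n},2^{-n+1}]$, telescoping for $|Y'^n|$ then for $|Y^n|$ via the controlled expansion, identification of the worst decay rate as $2^{-n\eta}$, and completion to arbitrary $\varepsilon$ exactly as in the Young case. The identification of where the exclusions $\eta_1\neq 0,\beta$ enter and why the quadratic dependence on $\|\bX\|_{\eta_2}$ arises is also correct.

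There is, however, a genuine slip in the last step. You propose to bound $R^Z_{s,t}$ in the diagonal regime $|t-s|\le s$ and then invoke Lemma~\ref{lem:equality} to pass to the full range. Lemma~\ref{lem:equality} is stated for increments of a one-parameter path; its proof uses only the triangle inequality $|Y_t-Y_s|\le \sum |Y_{s_{n+1}}-Y_{s_n}|$. The remainder $R^Z$ is genuinely two-parameter and satisfies instead
\[
R^Z_{s,t}=R^Z_{s,u}+R^Z_{u,t}+(Y_u-Y_s)(X_t-X_u),
\]
so the chaining acquires a cross term that must itself be controlled. This is not a cosmetic point: the cross term is precisely what forces you to first establish a singular $\beta$-H\"older bound on $Y$ (the first half of the $\cD_X^{\alpha+\beta,\eta}$ norm, which your sketch also omits). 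The paper handles both issues together by setting $\gamma_n:=\sup_{2^{-n}\le s<t\le 1}|R^Z_{s,t}|/|t-s|^{\alpha+\beta}$ and deriving the recursion
\[
\gamma_n\le\gamma_{n-1}+|R^Z_{2^{-n},2^{-n+1}}|\,2^{n(\alpha+\beta)}+\|Y\|_{\beta;[2^{-n},1]}\|X\|_{\alpha;[2^{-n+1},1]},
\]
after first bounding $\|Y\|_{\beta;[2^{-n},1]}$ by summing the dyadic increments $|Y^m-Y^{m-1}|$ already available from your telescoping. An equivalent fix would be to rerun the proof of Lemma~\ref{lem:equality} with the extra cross term inserted, as the paper does later in Theorem~\ref{thm:controlled_RP}; either way, the missing ingredient is the control of $(Y_u-Y_s)(X_t-X_u)$ and hence of the $Y$-increment part of $\|Z,Y\|_{\alpha+\beta,\eta}$.
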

\begin{proof}
The proof is {\color{black} obtained} using the same strategy as Theorem \ref{thm:improper_young}. We fix again $t=T=1$ and we introduce the notation $Y^n=Y_{2^{-n}}$, $\dot{Y}^n= Y'_{2^{-n}}$ and $Z_n := \int_{2^{-n}}^1 Y d\bX$. Using  the inequality \eqref{eq:sewing_rough} with $s=2^{-n}, t = 2^{-n+1}$, we obtain
\begin{equation}\label{eq:proof_rough_int}
\begin{split}
|Z_n- Z_{n-1}| \leq  &  |Y^n|  \| X \|_{\alpha; [2^{-n},2^{-n+1}]}2^{-\alpha n}+ |\dot{Y}^n|  \| \XX \|_{\alpha + \beta; [2^{-n},2^{-n+1}]}2^{-(\alpha+ \beta) n}\\&+ C\|\bX\|_{ [2^{-n},2^{-n+1}]}\|Y, Y'\|_{\gamma+ \beta; [2^{-n},2^{-n+1}]}2^{-(\alpha+ \beta+ \gamma) n}\,.
\end{split}
\end{equation}
Iterating the triangle inequality, there exists a constant $C'>0$ such that for every $n \geq 1$  one has
\begin{equation}\label{eq:estimate_deriv}
\begin{split}
|\dot{Y}^n|&\leq \sum_{m=1}^{n}|\dot{Y}^m- \dot{Y}^{m-1}|+ |\dot{Y}^0|\\&\leq  \sum_{m=1}^{n} 2^{-m (\eta_1-\beta)}\|Y, Y'\|_{\gamma + \beta,\eta_1} + |Y'_1|\\&\leq C'\|Y, Y'\|_{\gamma+ \beta,\eta_1} 2^{-n((\eta_1 -\beta)\wedge 0)}+ |Y'_1|\,,
\end{split}
\end{equation}
which implies
\begin{equation}\label{eq:estimate_derY}
\begin{split}
|\dot{Y}^n|\| \XX \|_{\alpha + \beta; [2^{-n},2^{-n+1}]}2^{-(\alpha + \beta )n}&\leq \|\bX\|_{\eta_2}\bigg(C'\|Y, Y'\|_{\gamma+ \beta,\eta_1} 2^{-n(\eta_2+(\eta_1 -\beta)\wedge 0))}+|Y'_1|2^{-n\eta_2}\bigg)\,.
\end{split}
\end{equation}
Passing to the first term in the right-hand side of \eqref{eq:proof_rough_int}, we can apply again \eqref{eq:estimate_deriv} to obtain for any $n\geq 1$
\begin{equation}\label{eq:estimate_Y}
\begin{split}
|Y^{n}|&\leq|Y^{n-1}|+|R^{Y}_{2^{-n},2^{-n+1}}|+ |\dot{Y}^{n}||( \hat{X}_{2^{-n}}- \hat{X}_{2^{-n+1}})| 
\\&\leq |Y^{n-1}|+   2^{-n\eta_1}\|Y, Y'\|_{\gamma+ \beta,\eta_1}\\&+ \|\bX\|_{\eta_2}( C'\|Y, Y'\|_{\gamma+ \beta,\eta_1} 2^{-n(\eta_2-\alpha+(\eta_1 -\beta)\wedge 0 )}+ |Y'_1|2^{-n(\eta_2-\alpha)})\,.
\end{split}
\end{equation}
Iterating recursively this estimate, we obtain that there exists a constant $C''>0$ such that
\[
|Y^{n}|\leq C''\|Y, Y'\|_{\gamma+ \beta,\eta_1}\bigg( 2^{-n(\eta_1\wedge 0)}+ \|\bX\|_{ \eta_2}(2^{-n((\eta_2- \alpha+(\eta_1 -\beta)\wedge 0 )\wedge 0))}+|Y'_1|2^{-n((\eta_2 -\alpha)\wedge 0) })\bigg) +|Y_1|{\color{black}\,.}
\]
Therefore we obtain 
\[
\begin{split}
|Y^{n}|\| X \|_{\alpha; [2^{-n},2^{-n+1}]}2^{-\alpha n}\leq \|\bX\|_{ \eta_2}\bigg(&C''\|Y, Y'\|_{\gamma+ \beta,\eta_1} \big(2^{-n(\eta_2- \beta+\eta_1\wedge 0)}+  \|\bX\|_{\eta_2}|Y'_1|2^{-n( \eta_2 - \beta
+(\eta_2 -\alpha)\wedge 0) } \\&+\|\bX\|_{\eta_2}2^{-n(\eta_2-\beta+(\eta_2- \alpha+(\eta_1 -\beta)\wedge 0 )\wedge 0))}  \big)+|Y_1|2^{-n(\eta_2- \beta)}\bigg)\,.
\end{split}
\]
Coming back to the initial estimate \eqref{eq:proof_rough_int}, there exist two constants $c, c'>0$ such that
\begin{equation}\label{eq:estimate_integral}
\begin{split}
|Z_n- Z_{n-1}|&\leq c( \|\bX\|_{ \eta_2}\vee \|\bX\|_{ \eta_2}^2)\bigg(\|Y, Y'\|_{\gamma+ \beta,\eta_1}\big(2^{-n(\eta_2- \beta +\eta_1\wedge 0)}+2^{-n(\eta_2+ \eta_1- \beta)}\\&+2^{-n(\eta_2-\beta+(\eta_2- \alpha+(\eta_1 -\beta)\wedge 0 )\wedge 0))})+ |Y'_1|(2^{-n( \eta_2 - \beta
+(\eta_2 -\alpha)\wedge 0) }\big) +|Y_1|2^{-n(\eta_2- \beta)}\bigg)\\&\leq c'(\|\bX\|_{ \eta_2}\vee \|\bX\|_{ \eta_2}^2)(\|Y, Y'\|_{\gamma+ \beta,\eta_1}+|Y'_1| +|Y_1|)2^{-n\eta}\,.
\end{split}
\end{equation}
By hypothesis on the parameters $\eta_1$ and $\eta_2$ one has $\eta>0$ and the sequence $I_n$ is a Cauchy sequence converging to some value $I$. Following the last part of Theorem \eqref{thm:improper_young}, we can repeat the same steps to deduce the convergence \eqref{eq:convergence_integral} and the first inequality \eqref{eq:thm_1}.  Let us show that the couple $(Z, Y)$ belongs to $\cD^{\alpha+ \beta, \eta}_{X}$. Firstly, we apply \eqref{eq:estimate_Y} to show that there exists a constant $c''>0$ such that
\begin{equation}\label{eq:proof_thm}
\begin{split}
\Vert Y \Vert_{\beta; [2^{-n}, 1]}&\leq \sum_{m=1}^n|Y^{m}- Y^{m-1}|2^{m\beta}\\&\leq  \sum_{m=1}^n|\dot{Y}^{m}|| \hat{X}_{2^{-m}}- \hat{X}_{2^{-m+1}}|2^{m\beta}+  \sum_{m=1}^n|  R^{Y}_{2^{-m},2^{-m+1}}|2^{m\beta}
\\&\leq c''(1\vee \|\bX\|_{ \eta_2})( \|Y, Y'\|_{\gamma+ \beta,\eta_1}+|Y'_1|)2^{-n((\eta_2-\alpha+ (\eta_1- \beta)\wedge 0))\wedge 0 +\alpha - (\beta +\alpha   ))}\,.
\end{split}
\end{equation}
and since $ (\eta_2-\alpha+ (\eta_1- \beta)\wedge 0))\wedge 0 +\alpha > \eta$ we conclude. On the other hand, we deduce from \eqref{eq:sewing_rough} and \eqref{eq:estimate_derY} that there exists a constant $c'''>0$ such that the function $R^Z_{s,t}=Z_t-Z_s-Y_s(X_t-X_s)$ satisfies for all $m\geq 1$
\begin{equation}\label{eq:estimate_I}
\begin{split}
|R^Z_{2^{-m+1},2^{-m}}|& \leq |\dot{Y}^m||\XX_{2^{-m},2^{-m+1}}|+ C\| \bX\|_{ [2^{-m},2^{-m+1}]}\|Y, Y'\|_{\gamma+ \beta; [2^{-m},2^{-m+1}]}2^{-(\alpha+ \beta+ \gamma) m}\\&\leq c'''\|\bX\|_{ \eta_2}(\|Y, Y'\|_{\gamma+ \beta,\eta_1} + |Y'_1|) 2^{-m(\eta_2+(\eta_1-\beta) \wedge 0)}\,.
\end{split}
\end{equation}
We introduce for every $n\geq 1$ the sequence
\[\gamma_{n}=\sup_{2^{-n}\leq s< t\leq 1}\frac{|R^Z_{s,t}|}{|t-s|^{\alpha+ \beta}}\,.\]
By splitting the above $\sup$ into two intervals and applying the two previous estimates \eqref{eq:proof_thm} and \eqref{eq:estimate_I}, we obtain for every $n\geq 1$
\[
\begin{split}
\gamma_n&\leq  \gamma_{n-1}+ |R^Z_{2^{-n},2^{-n+1}}| 2^{n(\alpha+ \beta)}+ \Vert Y \Vert_{\beta; [2^{-n}, 1]}\Vert X \Vert_{\alpha; [2^{-n+1}, 1]}\\&\leq  \gamma_{n-1}+ |R^Z_{2^{-n},2^{-n+1}}| 2^{n(\alpha+ \beta)}+ c'''\|\bX\|^2_{\eta_2}(\|Y, Y'\|_{\gamma+ \beta,\eta_1} + |Y'_1|) 2^{-n( \eta- (\alpha+ \beta))}\\&\leq \gamma_{n-1}+ (c'''\vee c'')(\|\bX\|_{ \eta_2}\vee \|\bX\|^2_{ \eta_2})(\|Y, Y'\|_{\gamma+ \beta,\eta_1} + |Y'_1|) 2^{-n(\eta- (\alpha+ \beta))}\,.
\end{split}
\]
Iterating recursively this inequality, we obtain a general estimate on the sequence $\gamma_n$ which implies trivially \eqref{eq:thm_2} and we conclude. The case of a generic $T>0$ is covered using the scaling properties of the norms $\|\bX\|_{ \eta_2}$ and $\|Y,Y'\|_{\gamma+ \beta, \eta_1}$.
\end{proof}

\begin{remark}
The present theorem is a generalisation of the classical rough integration as introduced in \cite{Gubinelli2004}. Indeed it is sufficient to set  $\alpha\in (1/3, 1/2]$, $\alpha=\beta $ and $\hat{X}= X$ to recover it. However, the proof of our result can be adapted to cover rough integration in a more general context when the underlying rough path is, for example, branched or geometric, see \cite{lyons1998,gub10}.

 Compared to Definition \ref{defn_rough_path}, these notions assume some additional algebraic conditions in their formulation but keep essentially the same H\"older-type structure. In addition, it is also possible to state an equivalent notion of rough integration for branched and geometric rough paths, which extends  \eqref{eq:sewing_rough}. Therefore, any new definitions, like singular-branched rough paths or singular-geometric rough paths, can be easily transferred to this new context. Moreover, any extension of Theorem \ref{thm:improper_rough} to these objects will involve only a careful check of more sophisticated dyadic powers without changing the proof strategy. Hence, we trust that the reader will be able to make suitable amendments on their own.
\end{remark}
}
%
\section{Application to Schramm-Loewner Evolution}\label{SLE_sec}
{\color{black}For any $\kappa>0$, we consider the family of conformal maps $(g_t)$ that are the maximal solution to Loewner's equation
\[ dg_t(z)= \frac{2}{g_t(z)- U_t}dt\,,\quad g_0(z)=z\in \mathbb{H}\]
where $U_t= \sqrt{\kappa} B_t$ with $(B_t)_{t\geq 0}$ a standard Brownian motion. Using the notations $f_t=(g_t)^{-1}$ and $\hat{f}_t(z):=f(z+U_t)$, the SLE trace $\gamma_{\kappa}$ (parametrised according to the half-plane capacity) can be defined a.s. for all $t\geq 0$ as the limit
\[\gamma_{\kappa}(t):=\lim_{u\to 0^+}\hat{f}_t(iu)\]
see \cite{rohde_schramm05,LSW11}.} In \cite{friz2017regularity} the H\"older regularity and $p$-variation of  $\gamma_\kappa$  were studied. In essence, there exists set $I(\kappa)\subset[0,\infty)$ depending $\kappa\in (0,\infty)$ and a constant $C>0$ not depending on $\kappa$ such that for any $r \in I(\kappa)$ one has the estimate
\begin{equation}\label{equ:SLE_moments}
\E[|\gamma_{\kappa}(t)-\gamma_{\kappa}(s)|^q]  \leq C s^{-\zeta/2} (t-s)^{(q + \zeta)/2} 
\end{equation}
where the parameters are functions $q=q(r)$ and $\zeta = \zeta (r)$ are given by\footnote{Strictly speaking, the exponents must be modified by arbitrarily small $\epsilon$, but for power counting arguments, this is good enough.}
\[ q\left( r\right) =\left( 1+\frac{\kappa }{4}\right) r-\frac{\kappa r^{2}}{8}\,, \qquad \zeta(r)=r- \frac{\kappa r^{2}}{8}\,.\]
We recall from \cite{friz2017regularity} that, at least for $\kappa \in (0,8)$ (our later interest concerns $\kappa \in (0,1)$ where boundary effects play a role) the set $I(\kappa)$ is defined introducing the auxiliary sets
\[\begin{split}
 I_{0}\left( \kappa \right) &:=\left\{ r\in \R%
:r<r_{c}(\kappa)\right\}\quad  \text{ with }r_{c}(\kappa)\equiv \frac{1}{2}+\frac{4}{\kappa }\,, \\
I_{1}\left( \kappa \right) &:=\left\{ r\in \R:q(r)>1\right\}\,, \quad
I_{2}\left( \kappa \right) :=\left\{ r\in \R:q(r)+\zeta(r)
>0\right\}\,.
\end{split}\]
And then we define $I (\kappa)  := I_{0}(\kappa)\cap I_{1}(\kappa)\cap I_{2} (\kappa)$.
Since we want to study the trace of SLE using the singular Besov spaces introduced in Definition \ref{defn_singular_Besov}, we introduce the set 
\[J_{2}(\kappa)=\left\{ r\in \R:\zeta \left(r\right) +q\left( r\right) >2\right\}\,\]
and we restrict the values of $r$ in $I(\kappa)\cap J_2(\kappa)$. Thanks to \cite[Lem 5.1]{friz2017regularity} one has the characterisation  
\begin{equation}\label{eq:charc_sets}
I(\kappa)\cap J_2(\kappa)= (1,r_c)
\end{equation}
and one has trivially that the set $(1/q, (\zeta+q)/2q))$ is non empty.
\begin{proposition}\label{Besov_inclusion}
Let $\kappa\in (0,1)$. For any $r\in I(\kappa)\cap J_2(\kappa)$ and $\delta\in (1/q, (\zeta+q)/2q))$ one has 
\[ \E \left\Vert \gamma_{\kappa} \right\Vert_{\delta ,q, \eta}^{q}<\infty\,, \]
for any parameter $\eta>0$ such that $\eta < (\delta -\zeta/(2q))$ and $\eta\leq (\delta-1/q)$.  
\end{proposition}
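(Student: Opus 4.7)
The plan is the usual Fubini-Tonelli plus moment-estimate strategy for Besov/Kolmogorov-type bounds, adapted to the weighted (singular) integrand. Since the integrand in \eqref{defn_besov_norm} is non-negative, I will move the expectation inside using Tonelli and obtain
\begin{equation*}
\E \left\Vert \gamma_{\kappa} \right\Vert_{\delta ,q, \eta}^{q} \;\leq\; \int_0^T\!\!\int_0^t \frac{\E|\gamma_\kappa(t)-\gamma_\kappa(s)|^q}{s^{q(\eta-\alpha)}|t-s|^{1+\delta q}}\,ds\,dt,
\end{equation*}
with $\alpha = \delta - 1/q$. The bound \eqref{equ:SLE_moments} (which is valid precisely because $r \in I(\kappa)$) then reduces the problem to showing that a purely deterministic double integral of the form
\begin{equation*}
\int_0^T\!\!\int_0^t s^{a}(t-s)^{b}\,ds\,dt,\qquad a = q(\alpha-\eta)-\tfrac{\zeta}{2},\qquad b = \tfrac{q+\zeta}{2}-1-\delta q,
\end{equation*}
is finite.

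Next I will check the three integrability conditions. Integrability at $s\to 0$ requires $a > -1$, which a short computation (using $\alpha = \delta - 1/q$) shows is equivalent to $\eta < \delta - \zeta/(2q)$, exactly the first hypothesis on $\eta$. Integrability at $t-s\to 0$ requires $b > -1$, equivalent to $\delta < (q+\zeta)/(2q)$, which is assumed via $\delta \in (1/q,(\zeta+q)/(2q))$. After computing the inner $s$-integral as $B(a+1,b+1)\,t^{a+b+1}$, the outer $t$-integral converges near $0$ iff $a+b+1 > -1$, and a direct calculation gives $a+b+1 = q/2 - 1 - q\eta$, hence the condition $\eta < 1/2$. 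I will then observe that this last condition is automatic: combining $\eta < \delta - \zeta/(2q)$ with $\delta < (q+\zeta)/(2q)$ immediately yields $\eta < 1/2$, so no extra hypothesis is needed.

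The only slightly delicate point is justifying Tonelli (i.e.\ that the joint measurability of $(\omega,s,t)\mapsto |\gamma_\kappa(t,\omega)-\gamma_\kappa(s,\omega)|^q$ is in place), which follows from sample-path continuity of the trace on $(0,T]$ established in \cite{rohde_schramm05}. The remaining requirement $\eta \leq \delta - 1/q$ is the definitional constraint for membership in the singular Besov class (Definition \ref{defn_singular_Besov}), and $\eta > 0$ ensures that the exponents carrying $s$-weights are strict. I expect no genuine obstacle; the entire argument is bookkeeping of exponents, and the conditions stated in the proposition are precisely the ones that make the Beta-type integral converge.
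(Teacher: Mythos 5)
Your proposal is correct and follows essentially the same route as the paper's proof: Tonelli, the moment estimate \eqref{equ:SLE_moments}, and power-counting on the resulting deterministic double integral, leading to the same exponent conditions $a>-1$ and $b>-1$. The additional Beta-function computation for the outer $t$-integral near zero is harmless but redundant, since $a>-1$ and $b>-1$ already force $a+b+1>-1$; the paper simply omits this step.
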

\begin{proof}
The result comes immediately from the definition of the singular Besov norm and the estimate \eqref{equ:SLE_moments}. Indeed we obtain
\begin{equation}\label{bound_expectation}
\E\left\Vert \gamma_{\kappa} \right\Vert_{\delta ,q, \eta}^{q}\leq C \int_{\{ 0 < s < t < T \}} s^{-q(\eta-\alpha)-\zeta/2}|t-s|^{(q+\zeta)/2-1-\delta q}ds dt
\end{equation} 
Thus we only need to check that the choice of $\eta$ and  $r$ in the statement implies that the integral on the right-hand side of \eqref{bound_expectation} is finite. First of all the condition on $\eta$ prevents the integral to be infinite at $s=0$ indeed one has trivially
\[-q(\eta-\alpha)-\zeta/2= -q\eta + \delta-1 -\frac{\zeta}{2} >- 1\,.\]
On the other hand to provide the integrability at the diagonal $t=s$ the condition $\delta<(\zeta+q)/2q)$ implies immediately
\[\frac{(q+\zeta)}{2}-1-\delta q>-1\,.\]
Thereby obtaining the result.
\end{proof}
Using the inclusion \eqref{sing_besov_embedding}, we can optimize over the range of admissible $r$ to include the SLE trace in a  singular H\"older space.
\begin{theorem}\label{thm_SLE}
Let $\kappa\in (0,1)$. With probability one, the SLE trace $\gamma_\kappa(t): 0 \le t \le 1$ takes values in the singular H\"older space $\CC^{\alpha, \eta}((0,1])$, where $\alpha < \alpha_\ast(\kappa)$  given by
\begin{equation}\label{opt_parameters}
\alpha_{\ast}\left( \kappa \right) =1-\frac{\kappa }{24+2\kappa -8\sqrt{\kappa +8}}\,
\end{equation}
and any parameter $\eta>0$ such that $\eta<1/2$ and $\eta \le \alpha$.
\end{theorem}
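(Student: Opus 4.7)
The plan is to chain two prior embeddings and then perform a one-parameter optimisation. First I would invoke Proposition \ref{Besov_inclusion}: for every $r \in I(\kappa) \cap J_2(\kappa) = (1, r_c)$ and every $\delta \in (1/q, (q+\zeta)/(2q))$, almost surely the SLE trace lies in the singular Besov space $W^{\delta, q, \eta}((0,1])$ whenever $\eta > 0$ satisfies $\eta < \delta - \zeta/(2q)$ and $\eta \le \delta - 1/q$. Then the Besov--H\"older embedding \eqref{sing_besov_embedding} from Proposition \ref{prop:char_alphaeta_Sobolev_Holder} places $\gamma_\kappa$ in $\CC^{\alpha, \eta}((0,1])$ with $\alpha = \delta - 1/q$. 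A standard intersection-of-countably-many-null-sets argument promotes this to a single almost sure event on which the inclusion holds simultaneously over a countable dense subset of admissible $(\alpha, \eta)$, hence by monotonicity over all admissible pairs.

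Next I would push $\delta$ toward its upper bound $(q+\zeta)/(2q)$, which makes $\alpha = \delta - 1/q$ arbitrarily close to
\[
\bar{\alpha}(r) := \frac{q(r) + \zeta(r) - 2}{2 q(r)},
\]
while the constraint $\eta < \delta - \zeta/(2q)$ degenerates in the limit to $\eta < 1/2$, which together with $\eta \le \alpha$ matches the theorem statement exactly. The remaining task is purely deterministic: maximise $\bar{\alpha}(r)$ over $r \in (1, r_c)$, using the explicit polynomials $q(r) = (1 + \kappa/4) r - \kappa r^2/8$ and $\zeta(r) = r - \kappa r^2/8$.

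For the optimisation I would exploit the identities $q - \zeta = \kappa r/4$ and $q' - \zeta' = \kappa/4$ to recast $\bar{\alpha}'(r) = 0$ as the quadratic
\[
\kappa^2 r^2 + 16 \kappa r - 16 \kappa - 64 = 0,
\]
whose relevant positive root is $r^\ast = 4(\sqrt{\kappa + 8} - 2)/\kappa$. A direct check confirms $r^\ast \in (1, r_c)$ for every $\kappa \in (0, 8)$, the upper inequality reducing to $(\kappa - 8)^2 > 0$ and the lower one to $\kappa < 8$. Substituting $\kappa r^\ast / 4 = \sqrt{\kappa + 8} - 2$ to simplify $q(r^\ast)$ and $\zeta(r^\ast) - 2$, the ratio collapses to
\[
\bar{\alpha}(r^\ast) = 1 - \frac{\kappa}{24 + 2 \kappa - 8 \sqrt{\kappa + 8}} = \alpha_\ast(\kappa),
\]
as claimed.

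The main (and essentially only) obstacle is this last algebraic step: the rational optimisation producing the nontrivial closed-form exponent $\alpha_\ast(\kappa)$. Once the identities $q - \zeta = \kappa r/4$ and $\kappa r^\ast/4 = \sqrt{\kappa+8}-2$ are in hand, the simplification is mechanical; everything else reduces to a direct application of the singular Besov--H\"older machinery developed earlier in the paper.
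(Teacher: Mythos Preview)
Your proposal is correct and follows essentially the same approach as the paper: apply Proposition~\ref{Besov_inclusion} together with the embedding \eqref{sing_besov_embedding}, then optimise the H\"older exponent $(\zeta+q-2)/(2q)$ over $r\in(1,r_c)$ to obtain $\alpha_\ast(\kappa)$, while the $\eta$-constraint reduces to $\eta<1/2$. The only difference is that you carry out the calculus optimisation explicitly (and add the countable-intersection argument for the a.s.\ statement), whereas the paper simply quotes the optimal $r_\ast=(-8+4\sqrt{8+\kappa})/\kappa$ from \cite[Thm~6.1]{friz2017regularity}.
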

\begin{proof} 
Following the proof of \cite[Thm 6.1]{friz2017regularity}, one has that the quantity 
\[\frac{\zeta+q}{2q}-\frac{1}{q}=\frac{\zeta+q-2}{2q}\,, \]
is maximal on $I(\kappa)\cap J_2(\kappa)$ for the  explicit value $r_{\ast}=(-8+ 4\sqrt{8+\kappa})/\kappa$ thus yielding the desired exponent $\alpha_\ast(\kappa)$. The choice $\alpha<\alpha_{\ast}(\kappa)$ is done in accordance with Proposition \ref{Besov_inclusion}. In order to obtain the optimal $\eta$ we will optimize over the same set of parameters the function
\[\frac{\zeta+q-2}{2q}\wedge\left(\frac{\zeta+q}{2q}- \frac{\zeta}{2q}\right)=\frac{\zeta+q-2}{2q}\wedge \frac{1}{2}\,.\]
In case  $\kappa\in (0,1)$ an explicit calculation shows $\alpha_{\ast}(\kappa)> 1/2$. Therefore its maximal value is given by $1/2$.
\end{proof}
Applying  Proposition \ref{prop_charach} to the SLE trace, we obtain an interesting property of its trajectories, {\color{black}which was summarised in the property \eqref{equ:us2}.}
\begin{corollary}\label{cor_SLE}
For any $\kappa \in (0,1) $ and any $\alpha<\alpha_{\ast}(\kappa)$ the re-parametrisation $ t \mapsto \gamma_{\kappa}(t^{2})$ is a.s. an $\alpha$-H\"older paths in  $\CC^{\alpha}([0,1])$.
\end{corollary}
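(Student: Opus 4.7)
The plan is to apply Proposition \ref{prop_charach} directly to the singular Hölder regularity furnished by Theorem \ref{thm_SLE}. Since the corollary concerns the specific reparametrisation $t \mapsto \gamma_\kappa(t^2)$, which corresponds to the exponent $\alpha/\eta = 2$ in Proposition \ref{prop_charach}, the natural move is to select $\eta = \alpha/2$ when invoking Theorem \ref{thm_SLE}, so that membership of $\gamma_\kappa$ in $\CC^{\alpha,\alpha/2}((0,1])$ translates, via the time change $t \mapsto t^2$, into the desired $\alpha$-Hölder regularity on $[0,1]$.

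The only genuine thing to check is that this choice of $\eta$ is admissible in Theorem \ref{thm_SLE}, which requires $0 < \eta \leq \alpha$ and $\eta < 1/2$. The first two inequalities are immediate from $\eta = \alpha/2$ and $\alpha > 0$. For the third, I would observe that the explicit formula \eqref{opt_parameters} gives $\alpha_\ast(\kappa) \in (1/2, 1)$ when $\kappa \in (0,1)$: indeed, a direct evaluation shows $\alpha_\ast(\kappa) \to 1$ as $\kappa \downarrow 0$ and $\alpha_\ast(1) = 1/2$, while the function is monotone in this range. Hence $\alpha < \alpha_\ast(\kappa) < 1$, which forces $\eta = \alpha/2 < 1/2$ strictly.

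With admissibility established, Theorem \ref{thm_SLE} yields $\gamma_\kappa \in \CC^{\alpha,\alpha/2}((0,1];\mathbb{H})$ almost surely, and Proposition \ref{prop_charach} with ratio $\alpha/\eta = 2$ delivers the continuous extension of $t \mapsto \gamma_\kappa(t^2)$ to an $\alpha$-Hölder function on $[0,1]$. There is essentially no obstacle here: the corollary is a repackaging of Theorem \ref{thm_SLE} through the time-change characterisation of singular Hölder spaces, and the only nontrivial point is the numerical verification that the required parameter $\eta = \alpha/2$ falls inside the admissible range prescribed by the theorem.
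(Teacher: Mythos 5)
Your proposal is correct and follows essentially the same strategy as the paper: combine Theorem \ref{thm_SLE} with the time-change characterisation of Proposition \ref{prop_charach}. The one small difference is that you make the exact choice $\eta = \alpha/2$ so that $\alpha/\eta = 2$ on the nose, whereas the paper allows any $\eta$ with $\alpha/\eta \leq 2$ and then composes $\gamma_\kappa(t^{\alpha/\eta})$ with the Lipschitz map $t \mapsto t^{2\eta/\alpha}$ to recover $\gamma_\kappa(t^2)$; your direct choice is cleaner and avoids the extra composition step, and your verification that $\eta = \alpha/2$ lies in the admissible range ($0 < \eta \leq \alpha$ and $\eta < 1/2$, the latter because $\alpha < \alpha_\ast(\kappa) \leq 1$) is exactly what is needed.
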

\begin{proof}
Proposition  \ref{prop_charach} implies  that for  $\alpha < \alpha_\ast(\kappa)$ given in \eqref{opt_parameters} and $\eta<1/2$ such that $\eta\leq \alpha$ the re-parametrisation of the SLE trace $\gamma_\kappa(t^{\alpha/\eta}): 0 \le t \le 1$ is a.s. an $\alpha$-H\"older path $\CC^{\alpha}([0,1])$.  By choosing $\eta$ such that the ratio $\beta=\alpha/\eta\leq 2$,  the reparametrisation $ t\to \gamma_{\kappa}(t^2)= \gamma_{\kappa}((t^{2/\beta})^{\beta})$ becomes the composition of a $\alpha$-H\"older function with a Lipschitz function. Thereby we obtain the thesis.
\end{proof}

\subsection{SLE trace as singular H\"older rough path}
We regard the SLE trace $\gamma_\kappa$ as a path in $\R^2$. The random tensor series 
$$
1+ \sum_{n=1}^\infty \int_{0 \le t_1 \le ... \le t_n \le 1}  (d\gamma_{\kappa})_{t_1} \otimes ... \otimes  (d\gamma_{\kappa})_{t_n} \quad \in T (( \R^2 )) 
$$
is called {\it SLE signature} and characterizes SLE trace, at least for $\kappa \le 4$, as was shown in \cite{horatioSLE}. The (componentwise) expectation, a deterministic element
in $T(( \R^2 ))$, is conjectured to determine the law of SLE and was partially computed in \cite{werness2012}. 
These iterated integral indeed make sense as ``variational'' Young integrals \cite{werness2012, friz2017regularity}. From a H\"older perspective,  they create an interesting scale of (singular) H\"older rough paths. 
\begin{theorem} Let $\kappa \in [0,8)$. The SLE trace $\gamma_\kappa$ can be lifted to an a.s. $\alpha$-H\"older geometric rough path for any $0<\alpha< \alpha_{\ast}(\kappa)$, when $\kappa  \in [1,8)$. Moreover,  in case $\kappa  \in [0,1)$ $\gamma_\kappa$ can be lifted to an a.s. singular $\alpha$-H\"older geometric rough path with singularity parameter $0<\eta<1/2$.
\end{theorem}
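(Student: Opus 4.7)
The natural split is according to the sign of $\alpha_\ast(\kappa)-1/2$: direct computation gives $\alpha_\ast(1)=1/2$, so for $\kappa\in[0,1)$ one has $\alpha_\ast(\kappa)>1/2$ (the Young regime), while for $\kappa\in[1,8)$ one has $\alpha_\ast(\kappa)\le 1/2$, which forces a genuine second-level lift. In both cases the first level $\gamma_\kappa$ is already under control: Theorem \ref{thm_SLE} yields $\gamma_\kappa\in\CC^{\alpha,\eta}((0,1])$ for $\alpha<\alpha_\ast(\kappa)$ and admissible $\eta$, and on $[0,1]$ continuity is Rohde--Schramm. What remains is to construct the iterated integral $\XX^{ij}_{s,t}=\int_s^t(\gamma_\kappa^i(r)-\gamma_\kappa^i(s))\,d\gamma_\kappa^j(r)$ with the right regularity and verify Chen's relation and the geometric (shuffle) identity $\XX^{ij}+\XX^{ji}=(\gamma_\kappa^i(t)-\gamma_\kappa^i(s))(\gamma_\kappa^j(t)-\gamma_\kappa^j(s))$.

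\emph{Case $\kappa\in[0,1)$.} Since one can choose $\alpha\in(1/2,\alpha_\ast(\kappa))$ and $\eta\in(0,1/2\wedge\alpha)$, the assumption $\alpha+\alpha>1$ of Theorem \ref{thm:improper_young} is met. Apply that theorem component-wise with $Y^i_r=\gamma_\kappa^i(r)-\gamma_\kappa^i(s)$ (which inherits the singular Hölder regularity of $\gamma_\kappa$) and $X^j=\gamma_\kappa^j$; this defines $\XX^{ij}_{s,t}$ as an improper Young integral and, via \eqref{thm:est_int}, gives a singular Hölder bound of the form $|\XX_{s,t}|\lesssim s^{\eta'-2\alpha}|t-s|^{2\alpha}$ with $\eta'=\eta+\eta>0$ after adjustment. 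Chen's relation is immediate from additivity of the Young integral on $[\varepsilon,T]$ followed by $\varepsilon\downarrow0$, and the geometric identity is the classical Young integration-by-parts formula. Combined with the singular regularity of $\gamma_\kappa$ this gives the singular $\alpha$-Hölder geometric rough path with $\eta<1/2$.

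\emph{Case $\kappa\in[1,8)$.} No singular control is claimed, so we only need an ordinary $\alpha$-Hölder geometric lift. The construction of the iterated integral has essentially been carried out in \cite{werness2012,friz2017regularity} in a $p$-variation/Besov framework. To pass to the Hölder scale, one imports the martingale moment bound of $\XX_{s,t}$ from \cite{friz2017regularity} (the weighted estimate analogous to \eqref{equ:SLE_moments} at the second level) and applies a Kolmogorov-type criterion for rough paths (as in \cite[Ch.~13]{FH2020}) on intervals $[\varepsilon,1]$, then $\varepsilon\downarrow 0$ using continuity of $\gamma_\kappa$ and the singular one-level bound already established. Chen's relation and the shuffle identity are inherited from the polygonal/smooth approximations used in the construction of \cite{werness2012,friz2017regularity}, for which they are tautological.

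The main obstacle is the case $\kappa\in[1,8)$: one has to convert the variational moment estimates of \cite{friz2017regularity} into Hölder-type bounds uniform up to a neighbourhood of $0$, which requires a careful tracking of the weight $s^{-\zeta/2}$ in \eqref{equ:SLE_moments} lifted to the second level. By contrast the case $\kappa\in[0,1)$ is essentially routine once Theorem \ref{thm:improper_young} is available, since the whole construction happens in the Young regime and the singular estimate is already encoded in \eqref{thm:est_int}.
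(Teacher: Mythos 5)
Your split into $\kappa\in[0,1)$ (Young regime) and $\kappa\in[1,8)$ (genuine rough) is the right one, and your treatment of $\kappa\in[0,1)$ via improper Young integration (Theorem \ref{thm:improper_young}) matches the paper's argument. The gap is in the $\kappa\in[1,8)$ case, where your route differs from the paper's and, as written, relies on an estimate that is not available.

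You propose to ``import the martingale moment bound of $\XX_{s,t}$ from \cite{friz2017regularity}'' and then run a Kolmogorov-type argument. But \cite{friz2017regularity} only establishes moment bounds for the first level $\gamma_\kappa$, namely \eqref{equ:SLE_moments}; the second-level (``lifted'') moment estimate you want to invoke does not exist in that reference and would be new work (e.g.\ BDG-type estimates for the iterated-integral martingale, together with the weight tracking). The paper avoids this entirely by going pathwise: from Proposition \ref{Besov_inclusion} the Besov norm $\|\gamma_\kappa\|_{\delta,q}$ is a.s.\ finite, and \cite{FV06var} turns a finite Besov norm into a control function $\omega$. Since the iterated integrals are already constructed in the $p$-variation sense by \cite{werness2012,friz2017regularity}, the same control $\omega$ automatically dominates $\XX_{s,t}$ (rough-path extension estimates are relative to a fixed control), and a control of the form $C(t-s)^{\alpha/\delta}$ (up to exponent) gives exactly the Hölder regularity for both levels. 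No second-level moment bound, Kolmogorov continuity argument, or $[\varepsilon,1]\to[0,1]$ limiting step is needed. Your final paragraph compounds the issue by saying one must ``carefully track the weight $s^{-\zeta/2}$ lifted to the second level'' for $\kappa\in[1,8)$: in that regime the theorem makes no singular claim, the boundary weight is irrelevant, and the paper explicitly ignores it; the weight only matters for $\kappa<1$, where you already handle it correctly via the improper Young integral.
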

\begin{proof} By direct inspection, this is true for $\gamma_0(t) = 2 i \sqrt{t}$, so assume $\kappa \in (0,8)$. We first treat the case $\kappa \ge  1$ in which case we can ignore boundary effects and singular parameter. In view of the $p$-variation regularity established in \cite{werness2012, friz2017regularity}, namely $p = 1 + \kappa^-/8$, iterated integrals are well-defined, but this does not quantify any H\"older control. We thus need to use, the control function $\omega$ that emerges from the Besov regularity, i.e. the function
{\color{black}\[\omega(s,t):= C\Vert \gamma_{\kappa}\Vert^{1/\delta}_{\delta,q}(t-s)^{\alpha/\delta}
\,,\]
where $C$ is a specific numerical constant,  as established in \cite{FV06var} and $\Vert \cdot\Vert_{\delta,q} $ is the Besov norm. Applying this control $\omega$, we obtain immediately the correct H\"older regularity of $\gamma_{\kappa}$, see \cite[Theorem 4.2, Theorem 6.1]{friz2017regularity}. Moreover the same control $\omega$ provides the right  estimates to obtain the correct H\"older regularity of the iterated integrals, which identifies $\gamma_\kappa$ and its $\lfloor 1/ \alpha \rfloor$ iterated integrals as $\alpha$-H\"older rough path.}  (Since $\alpha<\alpha_{\ast}(\kappa) \leq 1/2$, it is inconsequential to regard it as $\alpha$-H\"older rough paths with singularity parameter $\eta$). It remains to deal with the case $0 < \kappa < 1$. In this case $\alpha_{\ast}(\kappa) > 1/2$ away from $0$, so that the trace is a level-$1$ (a.k.a. Young) path on $[\eps,1]$, iterated (H\"older) Young integration is valid. On $[0,1]$, because of \eqref{equ:LVL}, this argument fails with classical iterated Young integration, but we can reinstall it via improper Young integration of Theorem \ref{thm:improper_young} {\color{black} where $\alpha_1, \alpha_2>1/2$ and $\eta_1, \eta_2>0$}. Details are left to the interested reader.
\end{proof}  



\section{Connection with regularity structures}\label{sec_reg_struct}
Inhomogeneous rough paths can be equivalently described using the formalism of regularity structures (We refer to \cite{Hairer2014,FH2020} and we suppose the reader is familiar with its main concepts). To see this link we  introduce  the vector space
\[\mathcal{T}= \bigoplus_{l\in A}\mathcal{T}_{l}\,,\quad A= \{\alpha-1, \beta-1, \alpha+ \beta-1, 0, \alpha, \beta, \alpha+ \beta\}\,,\]
where  each subspace $T_l$ is one-dimensional. We represent the element of its canonical basis with the notations
\[\begin{gathered}
\mathcal{T}_{\alpha-1}= \langle {\color{blue}\Xi}\rangle\,, \quad \mathcal{T}_{\beta-1}= \langle {\color{blue}\hat{\Xi}}\rangle\,, \quad \mathcal{T}_{\alpha+ \beta-1}=\langle {\color{blue}\hat{X}\Xi}\rangle\,, \quad \mathcal{T}_{0}=\langle {\color{blue}\mathbf{1}}\rangle\,,\\\mathcal{T}_{\alpha}= \langle {\color{blue}X}\rangle\,, \quad \mathcal{T}_{\beta}= \langle {\color{blue}\hat{X}}\rangle\,, \quad \mathcal{T}_{\alpha+ \beta}=\langle {\color{blue}\XX}\rangle\,.
\end{gathered}\]
We introduce $G$, a group of linear automorphism $\Gamma_h\colon \mathcal{T}\to \mathcal{T}$, defined for any $h\in \mathbf{R}^3$, $h=(h_1,h_2,h_3)$ as
\[\begin{gathered}
\Gamma_h{\color{blue}\Xi}={\color{blue}\Xi} \,, \quad \Gamma_h{\color{blue}\hat{\Xi}}={\color{blue}\hat{\Xi}} \,, \quad \Gamma_h{\color{blue}\mathbf{1}}={\color{blue}\mathbf{1}}\,, \quad \Gamma_h{\color{blue}\hat{X}\Xi}={\color{blue}\hat{X}\Xi}+ h_2{\color{blue}\Xi}\,,\\ \Gamma_h{\color{blue}X}={\color{blue}X} + h_1{\color{blue}\mathbf{1}}\,, \quad \Gamma_h{\color{blue}\hat{X}}={\color{blue}\hat{X}} + h_2{\color{blue}\mathbf{1}} \,,  \quad \Gamma_h{\color{blue}\XX}={\color{blue}\XX}+ h_2{\color{blue}X}+ h_3{\color{blue}\mathbf{1}} \,.
\end{gathered}\]
{\color{black}
The couple $(\mathcal{T}, G)$ is a simple example of regularity structure. Following \cite[Lemma 13.20]{FH2020}, we can  rewrite every element $\bX\in  \Cr([0,T])$ as a specific \emph{model}  $\mathbf{\Pi}(\bX)$ over $(\mathcal{T}, G)$ restricted over $[0,T]$, see \cite[Definition 2.17]{Hairer2014} for the general definition. In few words, $\mathbf{\Pi}(\bX)$ is a couple of elements  $\mathbf{\Pi}(\bX)=(\Pi^{\bX},\Gamma^{\bX})$. The first one is a map $\Pi^{\bX}\colon [0,T]\to \mathcal{L}(\mathcal{T}, \mathcal{S}'([0,T]))$ \footnote{This set denotes the  space of linear maps between $\mathcal{T}$ and $\mathcal{S}'([0,T])$, the space of tempered distributions over $[0,T]$.} which is given by 
\begin{equation}\label{eq:rough_model2}
\begin{gathered}
(\Pi^{\bX}_s{\color{blue}\mathbf{1}})(r)=1\,, \quad (\Pi^{\bX}_s{\color{blue}X})(r)= X_r-X_s\,,\quad (\Pi^{\bX}_s{\color{blue}\hat{X}})(r)= \hat{X}_r-\hat{X}_s\,,\quad  (\Pi^{\bX}_s{\color{blue}\XX})(r) =\XX_{sr} \\ (\Pi^{\bX}_s{\color{blue}\Xi})(\psi)=\dot{X}(\psi) \,, \quad (\Pi^{\bX}_s{\color{blue}\hat{\Xi}})(\psi)= \dot{\hat{X}}(\psi)\,,\quad (\Pi^{\bX}_s{\color{blue}\hat{X}\Xi})(\psi)=\dot{\XX}_{s}(\psi)\,,
\end{gathered}
\end{equation}
where $\dot{\hat{X}}$, $\dot{\hat{X}} $ $\dot{\XX}_{s}$ are respectively the distributional derivatives of $X$, $\hat{X}$ and $ r\to \XX_{s,r}$ and $\psi\colon (0,T)\to \bR$ is a generic test function. The second one is a two-parameters map $\Gamma^{\bX}\colon[0,T]^2\to G$ which is defined by 
\begin{equation}\label{def:Gamma_rough}
\Gamma^{\bX}_{t,s}= \Gamma_{h_{st}}\,,\quad h_{st}= (X_t-X_s, \hat{X}_t- \hat{X}_s, \XX_{s,t})\,.
\end{equation} 
Following this correspondence, we can also describe every element $(Y, Y')\in\D_{\hat{X}}^{\gamma+\beta}([0,T])$ or $Y\in \mathcal{C}^{\alpha}([0,T])$ in terms of  \emph{modelled distributions} over $\Pi^{\bX}$, see \cite[Lemma 13.20]{FH2020} and \cite[Definition 3.1]{Hairer2014} for the general definition. It turns out that the same relation can be used  to describe singular controlled rough paths in terms of \emph{singular modelled distributions}, see \cite[Chapter 6]{Hairer2014}. To define them, we use the shorthand notation $|\cdot|_{l}$ for  the absolute value of the $l$-th component in $\mathcal{T}$.}
\begin{definition}
Let $\delta>0$,  $\eta\in \mathbf{R}$, and $\mathbf{X}\in\mathscr{C}([0,T])$. A function $\mathbf{Y}\colon (0,T]\to \bigoplus_{l<\delta}\mathcal{T}_l$ is said to be a singular modelled distribution, in symbols $\mathbf{Y} \in \mathcal{D}^{\delta, \eta}((0,T])$, if it satisfies
\begin{equation}\label{def:norm_Dgamma}
\|\mathbf{Y} \|_{\mathcal{D}^{\delta, \eta}}:=\sup_{l<\delta}\sup_{0<s\leq T}\frac{|\mathbf{Y}_s|_l}{(s\wedge 1)^{(\eta -l)\wedge 0}}  +\sup_{l<\delta}\sup_{\substack{0<s<t\leq T\\ |t-s|\leq s}} \frac{|\mathbf{Y}_t -\Gamma_{t,s}\mathbf{Y}_s|_{l}}{(s\wedge 1)^{\eta -\delta}|t-s|^{\delta- l}}<\infty\,.
\end{equation}
\end{definition}
Following the trivial estimate 
\begin{equation}\label{trivia_minimum}
(s\wedge 1)\leq s\leq (T\vee 1)(s\wedge 1)
\end{equation}
for any $0<s<T$, we can easily see that  every function of $\mathcal{C}^{\alpha, \eta}((0,T])$ is trivially included in $\mathcal{D}^{\alpha, \eta}((0,T])$. More generally, one can provide a full characterisation of singular controlled rough paths in terms of singular modelled distributions.
\begin{theorem}\label{thm:controlled_RP}
Let $\bX\in  \Cr([0,T])$ and $(Y, Y')\in \D_{\hat{X}}^{\gamma+\beta,\eta}((0,T])$. Then the function 
\begin{equation}\label{eq:controlled_RP}
\mathbf{Y}_t:= Y_t{\color{blue}\mathbf{1}}+ Y'_t{\color{blue}\hat{X}}
\end{equation}
is an element of $\mathcal{D}^{\gamma+ \beta, \eta}((0,T])$. On the other hand, for every not-integer $\eta\leq  \gamma+ \beta$ and any function  in $\mathcal{D}^{\gamma+ \beta, \eta}((0,T])$ with values in $\mathcal{T}_0 \oplus \mathcal{T}_{\beta}$, its two components are an element of $\D_{\hat{X}}^{\gamma+\beta,\eta}((0,T])$. Moreover there exist a constant $C>0$ depending on $T, \alpha, \beta, \gamma$ such that
\begin{equation}\label{eq:first_bound}
\frac{1}{C}\|\mathbf{Y} \|_{\mathcal{D}^{\gamma+\beta, \eta}}\leq  |Y_T|+ |Y'_T|+ \|Y,Y'\|_{\gamma+ \beta, \eta}\leq C\|\mathbf{Y} \|_{\mathcal{D}^{\gamma+\beta, \eta}}\,.
\end{equation}
{\color{black}By replacing $\beta$ with $\alpha$ and ${\color{blue}\hat{X}}$ with ${\color{blue}X}$, we can equivalently describe $\D_{X}^{\gamma+\alpha,\eta}((0,T])$ with $\mathcal{D}^{\gamma+ \alpha, \eta}((0,T])$.}
\end{theorem}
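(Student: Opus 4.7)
The proof has three ingredients: an algebraic identification of the two norms, the forward direction ($\D \to \mathcal{D}$), and the reverse direction. First I would compute $\Gamma_{t,s}\mathbf{Y}_s$ using only the defining action of $\Gamma_h$ on ${\color{blue}\mathbf{1}}$ and ${\color{blue}\hat{X}}$: the result is $(Y_s + Y'_s(\hat{X}_t-\hat{X}_s))\,{\color{blue}\mathbf{1}} + Y'_s\,{\color{blue}\hat{X}}$, so that $\mathbf{Y}_t - \Gamma_{t,s}\mathbf{Y}_s = R^Y_{s,t}\,{\color{blue}\mathbf{1}} + (Y'_t - Y'_s)\,{\color{blue}\hat{X}}$. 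Reading off the two components and inserting into \eqref{def:norm_Dgamma} shows that, on the set $\{|t-s| \leq s\}$, the homogeneity part of $\|\mathbf{Y}\|_{\mathcal{D}^{\gamma+\beta,\eta}}$ coincides (up to \eqref{trivia_minimum}, which makes $s$ and $s\wedge 1$ comparable on $(0,T]$) with the restricted version of $\|Y,Y'\|_{\gamma+\beta,\eta}$.

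For the forward direction, the homogeneity part of the target norm is then immediate, since the supremum over $\{|t-s|\leq s\}$ is taken over a smaller set than the one defining $\|Y,Y'\|_{\gamma+\beta,\eta}$. The nontrivial estimate is the pointwise control of $|\mathbf{Y}_s|_l / (s\wedge 1)^{(\eta-l)\wedge 0}$ for $l \in \{0,\beta\}$. I would obtain it by the same dyadic iteration already carried out in the proof of Theorem \ref{thm:improper_rough}, compare \eqref{eq:estimate_deriv} and \eqref{eq:estimate_Y}: starting from the anchors $Y_T, Y'_T$ and stepping through $s_n = T\, 2^{-n}$, the homogeneity seminorm bounds each increment $|Y'_{s_{n+1}} - Y'_{s_n}|$ by a geometric term, and telescoping yields $|Y'_s| \lesssim (s\wedge 1)^{(\eta - \beta)\wedge 0}\|Y,Y'\|_{\gamma+\beta,\eta} + |Y'_T|$; the analogous bound on $|Y_s|$ uses the defining identity \eqref{remainder} together with the $\beta$-H\"older regularity of $\hat{X}$ (which is available because $\bX \in \Cr([0,T])$ is non-singular).

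For the reverse direction, $|Y_T|$ and $|Y'_T|$ are immediate from the pointwise norm of $\mathbf{Y}$ at $s = T$. The main obstacle is upgrading the homogeneity bounds from $\{|t-s|\leq s\}$ to all $0 < s < t \leq T$. For $Y'_t - Y'_s$ this is a direct application of Lemma \ref{lem:equality} with $\alpha = \gamma$ and $\delta = \gamma + \beta$. For $R^Y_{s,t}$ one cannot simply telescope, since $R^Y$ is not a pure increment; I would instead use the quasi-Chen identity
\begin{equation*}
R^Y_{s,t} = R^Y_{s,u} + R^Y_{u,t} + (Y'_u - Y'_s)(\hat{X}_t - \hat{X}_u),
\end{equation*}
which is immediate from $R^Y_{s,t} = Y_t - Y_s - Y'_s(\hat{X}_t - \hat{X}_s)$, and chain it along the dyadic sequence $s_n = 2^n s$ (up to $s_N \leq t$) used in the proof of Lemma \ref{lem:equality}. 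Each $R^Y_{s_n, s_{n+1}}$ then has $|s_{n+1}-s_n|\leq s_n$ and is controlled by the modelled-distribution homogeneity seminorm; the correction factor $(Y'_{s_n} - Y'_s)(\hat{X}_{s_{n+1}} - \hat{X}_{s_n})$ is controlled using the already-extended bound on $Y'$ and the $\beta$-H\"older regularity of $\hat{X}$. Summing the resulting geometric series in the style of Lemma \ref{lem:equality} (the non-integer hypothesis on $\eta$ enters here to rule out the degenerate sign-zero regime in the exponent of the summation) yields the desired bound $|R^Y_{s,t}| \lesssim s^{\eta - (\gamma+\beta)} |t-s|^{\gamma+\beta}$. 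The analogous claim for $\D_X^{\gamma+\alpha,\eta}((0,T])$ versus $\mathcal{D}^{\gamma+\alpha,\eta}$ is obtained by repeating the same argument with the roles of $X,\alpha$ and $\hat{X},\beta$ interchanged.
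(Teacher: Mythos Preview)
Your proposal is correct and follows essentially the same route as the paper: compute $\Gamma_{t,s}\mathbf{Y}_s$ to identify $\mathbf{Y}_t-\Gamma_{t,s}\mathbf{Y}_s = R^Y_{s,t}{\color{blue}\mathbf{1}}+(Y'_t-Y'_s){\color{blue}\hat{X}}$, handle the homogeneity part trivially in the forward direction and via Lemma~\ref{lem:equality} together with the quasi-Chen iteration for $R^Y$ in the reverse direction. Your explicit dyadic iteration for the forward pointwise bounds on $|Y_s|,|Y'_s|$ is in fact the right argument (a naive one-step bound from $s$ to $T$ would lose a factor $s^{-\gamma}$); the paper states these bounds without spelling out this step, and your telescoping of $R^Y$ via $(Y'_{s_n}-Y'_s)(\hat X_{s_{n+1}}-\hat X_{s_n})$ is an equivalent reorganisation of the paper's $(Y'_{s_{n+1}}-Y'_{s_n})(\hat X_t-\hat X_{s_{n+1}})$.
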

\begin{proof}
Let us prove the left-hand inequality in \eqref{eq:first_bound}. We decompose  $\|\mathbf{Y} \|_{\mathcal{D}^{\gamma+\beta, \eta}}$ as the sum $\|\mathbf{Y} \|_{\mathcal{D}^{\gamma+\beta, \eta}}= \mathbf{Y}_1+ \mathbf{Y}_2$. Using the definition of $\D_{\hat{X}}^{\gamma+\beta,\eta}$, the definition of $\Gamma_{s,t}$ in \eqref{def:Gamma_rough} and the trivial estimate \eqref{trivia_minimum} one has trivially $ \mathbf{Y}_2\leq \|Y,Y'\|_{\gamma+ \beta, \eta}$. In addition, for any  $0<s\leq T$ we have
\[\frac{|Y'_s|_l}{(s\wedge 1)^{(\eta -l)\wedge 0}}\leq  T^{\gamma}\|Y,Y'\|_{\gamma+ \beta, \eta}+ T^{\gamma-\eta}|Y'_T|\,, \quad \frac{|Y_s|_l}{(s\wedge 1)^{(\eta -l)\wedge 0}}\leq T^{\gamma+ \beta}\|Y,Y'\|_{\gamma+ \beta, \eta}+  T^{\gamma+ \beta-\eta}|Y'_T|+  T^{\gamma-\eta}|Y_T|\,.\]
Thereby obtaining the first part of \eqref{eq:first_bound}. On the other hand, Let $Z$ and $Z'$ denote the two component of  $\mathbf{Z}$. Thanks to Lemma \ref{lem:equality} with $\alpha= \gamma$ and $\delta= \gamma+\beta$ there exists a constant $D>0$ such that one has
\[ \sup_{0 < s < t \le T} \frac{|Z'_t - Z'_s|}{s^{\eta -(\gamma+ \beta)}|t-s|^{\gamma}}\leq D \|\mathbf{Z} \|_{\mathcal{D}^{\gamma+ \beta, \eta}}\,.\]
Combining this inequality with \eqref{trivia_minimum} and the trivial estimate
\[|Z_T|+ |Z'_T|\leq \frac{2}{T^{-(\eta\wedge 0)}\vee T^{-((\eta-\beta)\wedge 0)}}\|\mathbf{Z} \|_{\mathcal{D}^{\gamma+ \beta, \eta}}\,, \]
we obtain the right-hand inequality in  \eqref{eq:first_bound}, as long as we are able to estimate $R_{s,t}= Z_t-Z_s- Z'_s(\hat{X}_t-\hat{X}_s)$. Iterating the trivial identity $R_{s,t}= R_{s,u}+R_{u,t} + (Z'_u- Z'_s)(\hat{X}_t-\hat{X}_u) $, we can achieve this estimate by repeating  the procedure in the proof of Lemma \ref{lem:equality}. {\color{black}Thus for any  $0<s<t\leq T$ and $\eta\leq  \gamma +\beta$ we consider $N=\lfloor\log_2(t/s)\rfloor$ and there exist two constants $C',C''>0$ depending on $\eta$, $\gamma$ and $\beta$ such that
\[\begin{split}
|R_{s,t}|&\leq |R_{2^Ns,t}| +\sum_{n=0}^{N-1}|R_{2^ns ,2^{n+1}s}|+ \sum_{n=0}^{N-1}|Y'_{ 2^{n+1}s}- Y'_{2^ns}||\hat{X}_{t}-\hat{X}_{s_{n+1}}|\\&\leq  \|\mathbf{Y} \|_{\mathcal{D}^{\gamma+ \beta, \eta}}s^{\eta- (\gamma+ \beta)}\left((t-2^Ns)^{\gamma+ \beta}+ C'(t-s)^{\gamma+ \beta}+ C''\|\hat{X}\|_{\beta}(t-s)^{\beta}(t-s)^{\gamma} \right)\\&\leq (1+ C'+ C'')\|\mathbf{Y} \|_{\mathcal{D}^{\gamma+ \beta, \eta}}s^{\eta- (\gamma+ \beta)}(t-s)^{\gamma+ \beta}\\&\leq (1+ C'+ C''\|\hat{X}\|_{\beta})\|\mathbf{Y} \|_{\mathcal{D}^{\gamma+ \beta, \eta}}(s\wedge 1)^{\eta- (\gamma+ \beta)}(t-s)^{\gamma+ \beta}\,.
\end{split}\]}
Thereby obtaining the second part of bound \eqref{eq:first_bound}. {\color{black}The equivalence among $\D_{X}^{\gamma+\alpha,\eta}((0,T])$ and  $\mathcal{D}^{\gamma+ \alpha, \eta}((0,T])$ follows immediately by replacing $\beta$  and ${\color{blue}\hat{X}}$ as in the statement}.
\end{proof}
{\color{black}
\begin{remark}
Using this equivalence, the general tools of regularity structures can provide an alternative proof of Theorem \ref{thm:improper_young} and \ref{thm:improper_rough}  when $\eta_2$ is respectively $\alpha$ and $\alpha+ \beta$. We will give a simple sketch of it in this second case. Starting from a singular controlled rough path $(Y,Y')\in\D_{\hat{X}}^{\gamma+\beta,\eta_1}((0,T])$, we consider $\mathbf{Y}\in \mathcal{D}^{\gamma+ \beta, \eta_1}((0,T])$ given  in \eqref{eq:controlled_RP} and we define the map  $\mathbf{Y} {\color{blue}\Xi} \colon (0,T]\to \mathcal{T}$ as
\[(\mathbf{Y} {\color{blue}\Xi})_t:= Y_t{\color{blue}\Xi}+ Y'_t{\color{blue}\hat{X}\Xi}\,.\]
This function is an example of product between two singular  modelled distribution  and  it is an element of $\mathcal{D}^{\gamma+ \beta +\alpha-1, \eta_1+ \alpha-1}((0,T])$ (see \cite[Prop 6.12]{Hairer2014}). Writing down the hyphotesis of  Theorem  \ref{thm:improper_rough} when $\eta_2= \alpha+\beta$, we have that $\eta_1+\alpha-1>-1$. This condition, together with the assumption $\alpha+\beta+ \gamma-1>0 $ allows us to apply the \emph{reconstruction theorem} for singular modelled distribution \cite[Prop.  6.9]{Hairer2014}. Loosely speaking, the theorem states that for any $\mathbf{Z}\in \mathcal{D}^{\delta, \eta}((0,T]) $ satisfying $\delta>0$ and $\eta \wedge \alpha-1>-1$ we can uniquely associate a distribution $ \mathcal{R}(\mathbf{Z})\in \mathcal{S}'([0,T])$ which satisfies a bound of the type
\begin{equation}\label{eq:reconstruction} \mathcal{R}(\mathbf{Z})(\psi^{\lambda}_s) = \Pi^{\bX}_s (\mathbf{Z}_s)(\psi^{\lambda}_s)+ \OO(\lambda^{\delta})\,,
\end{equation}
where $\psi^{\lambda}_s=\lambda^{-1}\psi((\cdot- s)/\lambda)$, $\psi $ is a generic smooth, compactly supported function $\psi\colon (-1, 1)\to \bR$ and $\lambda$ is sufficiently small. Writing this general result in the case of $\mathbf{Y} {\color{blue}\Xi}$, we obtain then a unique distribution $\mathcal{R}(Y {\color{blue}\Xi})$ such that
\begin{equation}\label{eq:reconstruction2}
\mathcal{R}(Y {\color{blue}\Xi})( \psi^{\lambda}_s)= Y_s\dot{X}(\psi^{\lambda}_s)+ Y'_s\dot{\XX}_s( \psi^{\lambda}_s)+ \OO(\lambda^{\gamma+ \beta +\alpha-1})\,.
\end{equation}
Comparing \eqref{eq:reconstruction2} with \eqref{eq:sewing_rough}, we realise that $\mathcal{R}(Y {\color{blue}\Xi})$ is the distributional derivative of the improper rough integral and one has the identity up to addition of constants
\begin{equation}\label{eq:recon_rough}
\int^t_{0^+}Y_rd\bX_r= \int_0^t \mathcal{R}(Y {\color{blue}\Xi})(ds)\,,
\end{equation}
where the right-hand side of \eqref{eq:recon_rough} is the  \emph{primitive} of  $\mathcal{R}(Y {\color{blue}\Xi})$ that cancels in zero. To define this operation, it sufficient to test $\mathcal{R}(Y {\color{blue}\Xi})$ against a sequence of smooth functions approximating the indicator of the set $[0,t]$ and to show that there is a unique limit. The additional properties of the improper rough integral can be deduced by studying the analytic regularisation of taking the primitive of a distribution.
\end{remark}
}

\section{Application to Fractional Modelling and Rough Volatility}

Recent advances in quantitative finance led to models where (stochastic) volatility {\color{black} runs on ``rougher'' than diffusive scales}, locally described by a Hurst parameter $H \in (0,1/2)$. Following \cite{bayer2016pricing,bayer2019regularity}, moves in log-price then involve stochastic It\^o integrals of the form
\begin{equation} \label{e:roughvol2}
          \int_0^T f (W^H_t) d W_t \;,
\end{equation}
where  $f$ is a sufficiently smooth map, $W$ is a standard Brownian motion, which we assume two-sided so that $\xi = \dot{W}$ is white noise on $\R$. L\'evy or Riemann--Liouville
fractional Brownian motion is then given by
\begin{equation} \label{equ:fishyFBM}
       W^H_t := \int_0^t K^H(t-s)\, dW_s  = (K^H * \xi_+)(t)
\end{equation}
with Riemann--Liouville kernel $K^H(u) = u^{H-1/2}1_{u>0}$, and $\xi_+$ white noise  on $\R_+$, given as distributional derivative of $W1_{\R_+}$. There is interest e.g. from asymptotic option pricing \cite{bayer2019regularity,friz2018precise}) to have a rough path type stability for this integral. For $H < 1/2$, this requires an extension of rough paths that constitutes a most instructive example of a non-trivial regularity structures \cite{bayer2019regularity}. In order to apply the standard results of the general theory developed  in \cite{Hairer2014}, it is desirable -- and necessary for BPHZ renormalisation \`a la \cite{ch2016analytic} -- to replace the process \eqref{equ:fishyFBM} by a stationary process, which we may take of the form
\begin{equation} \label{equ:}
  \widehat{W}^H_t := (\widehat{K}^H * \xi)(t) \,,
\end{equation}
where $\xi$ is a white noise on $\R$ and $\widehat{K}^H$ is a smooth function $\widehat{K}^H\colon \R\setminus\{0\} \to \R_+$ satisfying   the properties:
\begin{enumerate}
\item $\widehat{K}^H \equiv K^H$ on $[0,T]$ and $\supp(\widehat{K}^H)\subset [0,2T]$;
\item there exist a constant $C>0$ such that for $k=0,1,2$ one has
\[|\d^k \widehat{K}^H(u)|\leq C|\d^k K^H(u)|\,.\]
\end{enumerate}
The existence of such function is a trivial exercise in case of the Riemann--Liouville kernel. We now show that $W^H$ can be regarded as {\it singular} controlled rough path when $H>1/4$. The reference inhomogeneous rough path $\mathbf{W}$, over stationary noise, is obtained  {\color{black} by considering} the triplet of functions
\begin{equation}\label{defn:rough_vol_model}
\mathbf{W}=(W_t\,, \,  \widehat{W}^H_t\,,\, \int_{s}^t(\widehat{W}^H_r-\widehat{W}^H_s)dW_r)\,,
\end{equation}
where the integral in $dW$ is an It\^o integral. Using some standard tools of stochastic calculus one has  that the conditions \eqref{defn:rough_vol_model} defined an  element $\mathbf{W}$ which belongs a.s. to $\Cr([0,T])$, where $\alpha=1/2^-$ and $\beta=H^-$. Before stating a rigorous result on $W^H$, we recall an elementary property of the function
\begin{equation}
t\mapsto \int_{-\infty}^0\widehat{K}^H(t-r)\,d B_r\,, \end{equation}
which is smooth on $(0,\infty)$ for any $B\in\CC^{\delta}([-2T,0])$, $\delta\in(0,1)$ and we can {\color{black} differentiate} under the sign of integral, because $\widehat{K}^H$ is smooth outside zero.
\begin{lemma}\label{lem:R bound} Let $\delta\in(0,1)$ and $B\in\CC^{\delta}([-2T,0])$ such that $B_0=0$. There exists a constant $M>0$ depending on $T$ and $\delta$ such that for any $t>0$ one has 
\begin{equation}\label{eq:R bound}
\Big|\int_{-\infty}^0\d_t\widehat{K}^H(t-r)\,dB_r\Big|\leq M t^{H-3/2+\delta} \|B\|_{\delta; [-2T,0]}\,  .
\end{equation}
\end{lemma}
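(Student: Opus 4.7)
The plan is to integrate by parts so as to trade one derivative on $\widehat K^H$ for the factor $|B_r|\leq\|B\|_{\delta;[-2T,0]}|r|^{\delta}$ (available because $B_0=0$), and then evaluate the resulting Lebesgue integral by scaling. Write $g(r):=\partial_t\widehat K^H(t-r)$. Since $\mathrm{supp}(\widehat K^H)\subset[0,2T]$, the integrand is supported in $r\in[t-2T,t]$; intersected with $r\leq 0$ this is $[t-2T,0]$ for $t\in(0,2T]$ and empty for $t>2T$ (making \eqref{eq:R bound} trivial). Restricting to $t\in(0,2T]$, the argument $t-r\geq t>0$ stays away from the singularity of $\widehat K^H$ at the origin, so $g\in C^\infty([t-2T,0])$. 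Moreover, $\widehat K^H$ being smooth with compact support forces $(\partial\widehat K^H)(2T)=0$, so that $g(t-2T)=0$; combined with $B_0=0$, classical integration by parts yields
\[
\int_{-\infty}^0 g(r)\,dB_r = \int_{t-2T}^0 (\partial^2\widehat K^H)(t-r)\,B_r\,dr\,.
\]

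Using Assumption (ii), which implies $|(\partial^2\widehat K^H)(u)|\leq C\,u^{H-5/2}$, together with $|B_r|=|B_r-B_0|\leq \|B\|_{\delta;[-2T,0]}|r|^{\delta}$, one obtains
\[
\Bigl|\int_{-\infty}^0\partial_t\widehat K^H(t-r)\,dB_r\Bigr|\leq C'\,\|B\|_{\delta;[-2T,0]}\int_{t-2T}^0 (t-r)^{H-5/2}|r|^{\delta}\,dr\,.
\]
The substitution $r=-tv$ converts the right-hand side into $t^{H-3/2+\delta}\int_0^{(2T-t)/t}(1+v)^{H-5/2}v^{\delta}\,dv$, and this remaining integral is bounded uniformly in $t\in(0,2T]$ by $\int_0^\infty(1+v)^{H-5/2}v^{\delta}\,dv<\infty$: the hypothesis $\delta>0$ provides integrability at $v=0$, and $H+\delta<3/2$ (automatic for $H<1/2$, $\delta\in(0,1)$) provides integrability at infinity. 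This produces the estimate \eqref{eq:R bound} with constant $M$ depending only on $T$, $H$, $\delta$.

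The only subtle point I foresee is verifying that both boundary terms in the integration by parts genuinely vanish---the left one through the hypothesis $B_0=0$, the right one through the smoothness of the compactly supported $\widehat K^H$ at $u=2T$ (forcing all derivatives there to vanish so that $\widehat K^H$ matches $0$ smoothly across the endpoint of its support). Everything else is routine scaling analysis of the resulting integral.
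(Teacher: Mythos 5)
Your proof is correct, and it takes a genuinely different (and arguably slicker) route than the paper's. The paper proceeds by \emph{interpolation via mollification}: it first establishes two endpoint bounds after integrating by parts once --- a $t^{H-1/2}\|B\|_{1}$ bound using one derivative of the kernel, and a $t^{H-3/2}\sup|B|$ bound using two --- and then, for $\delta\in(0,1)$, mollifies $B$ at scale $\varepsilon=t$, applies the first bound to $B^t$ and the second to $B-B^t$, and uses the standard mollifier estimates $\sup|B-B^t|\lesssim t^{\delta}\|B\|_{\delta}$, $\|B^t\|_{1}\lesssim t^{\delta-1}\|B\|_{\delta}$ to recombine. You instead integrate by parts directly (checking, correctly, that both boundary terms vanish: the one at $r=0$ because $B_0=0$, the one at $r=t-2T$ because $\widehat K^H$ is smooth and compactly supported, so $\partial\widehat K^H(2T)=0$), then exploit $B_0=0$ in the form $|B_r|\leq\|B\|_{\delta}|r|^{\delta}$ and evaluate the resulting Lebesgue integral $\int_{t-2T}^{0}(t-r)^{H-5/2}|r|^{\delta}\,dr$ by the scaling $r=-tv$, with convergence at $v=0$ from $\delta>0$ and at infinity from $H+\delta<3/2$. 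Your approach is shorter and avoids the mollification step entirely, but it leans more heavily on the hypothesis $B_0=0$; the paper's interpolation argument is a more modular technique that would survive if that normalisation were dropped or if one wanted the intermediate $C^0$ and $C^1$ bounds separately. Both arguments yield the same exponent and a constant depending only on $T,H,\delta$.
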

\begin{proof}
First consider the case when $B$ is a $\mathcal{C}^{1}([-2T, 0])$ function. Notice that the property (2) of $\widehat{K}^H $ implies the trivial bound
\begin{equation}\label{eq:first_site}
\Big|\int_{-\infty}^0\d_t\widehat{K}^H(t-r)\,dB_r\Big|\leq C\int_0^{2T}|\partial_tK^H(t+r)|dr\,\|B\|_{1; [-2T,0]}
\leq  C 't^{H-1/2}\|B\|_{1; [-2T,0]}\,,
\end{equation}
for some constant $C'>0$ depending on $T$. Next, by integration by parts one sees that
\[
\int_{-\infty}^0\widehat{K}^H(t-r)\,d B_r=-\int_0^{2T} B_{-r}\partial\widehat{K}^H(t+r)\,dr\,.
\]
Therefore the property (2) of $\widehat{K}^H $ implies there is also has another constant $C''>0$ such that
\begin{equation}\label{eq:second_site}
\Big|\int_{-\infty}^0\d_t\widehat{K}^H(t-r)\,dB_r\Big|\leq C\int_0^{2T}|\partial^2_tK^H(t+r)|dr\sup_{-2T\leq r\leq 0}|B_{r}|
\leq  C ''t^{H-3/2}\sup_{-2T\leq r\leq 0}|B_{r}|\,.
\end{equation}
In case when $\delta\in (0,1)$, we consider a standard  sequence of mollifiers $\phi_{\varepsilon}$ on $[0,T]$ such that  $\int \phi_{\varepsilon} =1$ and defining $B^{\varepsilon}= \phi_{\varepsilon}* B$ we can combine \eqref{eq:first_site} and \eqref{eq:second_site} to obtain
\begin{equation}\label{eq:third_site}
\begin{split}
\Big|\int_{-\infty}^0\d_t\widehat{K}^H(t-r)\,dB_r\Big|&\leq \Big|\int_{-\infty}^0\d_t\widehat{K}^H(t-r)\,d(B_r- B^t_r)\Big|
+ \Big|\d_t\int_{-\infty}^0\d_t\widehat{K}^H(t-r)\,dB^t_r\Big|\\&\leq C''t^{H-3/2}\sup_{-2T\leq r\leq 0}|B_{r}- B^t_r|+C't^{H-1/2}\|B^t\|_{1; [-2T,0]} \,.
\end{split}
\end{equation}
Using the properties of the mollifiers we can easily show that there exists a constant $C'''>0$ such that
\begin{equation}\label{eq_fourth_site}
\sup_{-2T\leq r\leq 0}|B_{r}- B^t_r|\leq C'''t^{\delta}\|B\|_{\delta; [-2T,0]}\,, \quad \|B^t\|_{1; [-2T,0]}\leq C'''t^{\delta-1}\|B\|_{\delta; [-2T,0]}\,.
\end{equation}
Combining \eqref{eq:third_site} and \eqref{eq_fourth_site}, we obtain the desired inequality.
\end{proof} 
Using this path-wise estimate we can easily show the following description.
\begin{proposition}
The couple $(Y,Y') = (W^H, 1)$ belongs a.s. to $\D_{\widehat{W}^H}^{\gamma+H,H^-}((0,T])$ for any $\gamma\in (0,1)$ such that $\gamma+ H<1$.
\end{proposition}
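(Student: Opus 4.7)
The plan is to reduce the estimation of $R^Y$ to a pathwise estimate for a ``pre-history'' integral, to which Lemma~\ref{lem:R bound} applies directly. First, I use that $\widehat{K}^H \equiv K^H$ on $[0,T]$ and $\operatorname{supp}(\widehat{K}^H) \subset [0,2T]$ to split, for every $t \in [0,T]$,
\[
\widehat{W}^H_t = \int_0^t K^H(t-r)\,dW_r + \int_{-\infty}^0 \widehat{K}^H(t-r)\,dW_r = W^H_t + R_t,
\]
where $R_t := \int_{-\infty}^0 \widehat{K}^H(t-r)\,dW_r$. The remainder of $(W^H,1)$ against $\widehat{W}^H$ is therefore $R^Y_{s,t} = W^H_t - W^H_s - (\widehat{W}^H_t - \widehat{W}^H_s) = -(R_t - R_s)$. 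Since $Y' \equiv 1$, the corresponding summand in $\|Y,Y'\|_{\gamma+H,\eta}$ vanishes, and the whole problem reduces to controlling the increments of $R$.

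Second, because $\widehat{K}^H$ is smooth on $\mathbb{R}\setminus\{0\}$ and compactly supported, the map $u \mapsto R_u$ is smooth on $(0,\infty)$ with derivative $R'(u) = \int_{-\infty}^0 \partial_u \widehat{K}^H(u-r)\,dW_r$. Applying Lemma~\ref{lem:R bound} with $B$ taken as the restriction of the two-sided Brownian motion $W$ to $[-2T,0]$ (which a.s.\ satisfies $B_0 = 0$ and is $\delta$-H\"older for every $\delta < 1/2$) yields a finite random constant $M$ such that, almost surely,
\[
|R'(u)| \le M\, u^{H-3/2+\delta}, \qquad u \in (0,T].
\]

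Third, for $0 < s < t \le T$ with the restriction $|t-s| \le s$, the fundamental theorem of calculus together with $H-3/2+\delta < 0$ gives
\[
|R_t - R_s| \le \int_s^t |R'(u)|\,du \le M\, s^{H-3/2+\delta}(t-s).
\]
Factoring $(t-s) = (t-s)^{\gamma+H}(t-s)^{1-\gamma-H}$, and using $1-\gamma-H > 0$ with $|t-s| \le s$ so that $(t-s)^{1-\gamma-H} \le s^{1-\gamma-H}$, I obtain
\[
|R^Y_{s,t}| \le M\, s^{(\delta - 1/2) - \gamma}(t-s)^{\gamma+H} = M\, s^{\eta - (\gamma+H)}(t-s)^{\gamma+H},
\]
with $\eta := H - (1/2 - \delta)$, which can be made arbitrarily close to $H$ by letting $\delta \uparrow 1/2$.

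Finally, Lemma~\ref{lem:equality} (applied with its $\alpha$ and $\delta$ playing the roles of $\gamma$ and $\gamma+H$) lifts this restricted-diagonal bound to the full singular H\"older semi-norm of $R^Y$, which gives $(W^H, 1) \in \mathcal{D}^{\gamma+H, H^-}_{\widehat{W}^H}((0,T])$ almost surely. The main obstacle is the pathwise identification of $\widehat{W}^H_t - W^H_t$ with the smooth stationary correction $R_t$ and the correct invocation of Lemma~\ref{lem:R bound}; once the bound on $|R'(u)|$ is in hand, the rest of the argument is pure power counting.
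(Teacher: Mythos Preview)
Your proposal is correct and follows essentially the same route as the paper: identify $R^Y_{s,t}$ with the increment of $R_t = \int_{-\infty}^0 \widehat{K}^H(t-r)\,dW_r$, bound $|R'(u)|$ via Lemma~\ref{lem:R bound} with $\delta = 1/2^-$, and integrate. The paper avoids your near-diagonal detour by proving the bound directly at the endpoint $\gamma+H=1$ (where the exponent on $|t-s|$ is exactly $1$, so no factoring is needed) and then invoking the embedding \eqref{eq:trivial_properties}; note also that in your application of Lemma~\ref{lem:equality} both of its parameters $\alpha$ and $\delta$ should equal $\gamma+H$, not $\gamma$ and $\gamma+H$.
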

\begin{proof}
We prove the result when $\gamma + H =1$ and the general case $\gamma+H<1$ follows by the embedding \eqref{eq:trivial_properties}. Since $Y'$ is constant we need only to estimate $R^{Y}$. It follows immediately from property (1) of $\widehat{K}^H$ that $ W^H=\widehat{K}^H*\xi_+$ on $[0,T]$. By writing $\xi= \xi_++ \xi_{-}$ where $\xi_-$ is a white noise on $(-\infty, 0]$ for any for $0<s<t<T$, we have the a.s. identity 
\begin{equation}\label{eq:diff_frac}
R_{s,t} = W^H_t - W^H_s -( \widehat{W}^H_t-\widehat{W}^H_s) = \int_{-\infty}^0\widehat{K}^H(t-r)- \widehat{K}^H(s-r)\,dW_r \,.
\end{equation}
We apply Lemma \ref{lem:R bound} with $B=W$ and $\delta=1/2^{-}$. Since $\|W\|_{\delta, [-2T, 0]}$ is almost surely finite, \eqref{eq:R bound} yields the bound
\begin{equation}
|R_{s,t}|\leq |t-s|\sup_{v\in[s,t]}|\d_u\Big(\int_{-\infty}^0\widehat{K}^H(u-r)\,dB_r\Big)|_{u=v}\leq D |t-s|s^{H^{-}-1}\,.
\end{equation}
which shows the result.
\end{proof}
Since we want to describe the rough integration of a non linear function of  $(W^H, 1)$ we fix $\gamma= (1/4)^+$ so that $(1/2)^- + H +\gamma>1 $. Applying then some general tools of regularity structures, we can actually describe the stochastic integral \eqref{e:roughvol2} in terms of some smooths approximants. In what follows, we introduce $W^\eps$, $\widehat{W}^{\eps,H}$ and $W^{\eps,H}$ as smooth approximations of $W$,  $\widehat{W}^H $ and $W^H$, built by replacing the white noise $\xi$ by a stationary approximation $\xi^\eps$, obtained by convolution with a rescaled mollifier function, $\varrho_{\varepsilon}= \eps^{-1}\varrho(\eps^{-1}\cdot)$.
We also introduce the canonical stationary rough path
$$
\mathbf{W}^\eps=(W^\eps_t\,, \,  \widehat{W}^{\eps,H}_t\,,\, \int_{s}^t(\widehat{W}^{\eps,H}_r-\widehat{W}^{\eps,H}_s)dW^{\eps}_r)\,,
$$
and its BPHZ renormalisation, see \cite[Chap 14]{FH2020}
$$
\mathbf{W}^{\eps;\mathrm{ren}}=(W^\eps_t\,, \,  \widehat{W}^{\eps,H}_t\,,\, \int_{s}^t(\widehat{W}^{\eps,H}_r-\widehat{W}^{\eps,H}_s)dW^{\eps}_r - c^{\eps,H} (t-s))\,,
$$
best understood as integrated effect of centring in mean $\widehat{W}^{\eps,H}_t \xi_t$. The renormalisation constant 
\begin{equation}\label{ren_constant}
c^{\eps,H}=\mathbf{E} ( \widehat{W}^{\eps,H}_t \xi^{\eps}_t) \;,
\end{equation}
does not dependent on $t$ by stationarity. 
\begin{theorem}\label{final_thm}
For $f \in \CC_b^3$ and $H> 1/4$ one has the convergence in probability {\color{black} uniformly on compact sets
\[
\left(\int_0^t f (W^{\eps,H}_s) \xi^{\eps}_sds - c^{\eps,H} \int_0^t Df (W^{\eps,H}_s) ds \right)_{t\in[0,T]}\to \left(\int_0^t f ({W}^H_s) d W_s\right)_{t\in[0,T]} \;, 
\]
where $(c^{\eps,H})_{\eps > 0}$ diverges at rate $\eps^{H-1/2}$.}
\end{theorem}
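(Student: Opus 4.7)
The plan combines three ingredients: (i) BPHZ renormalisation of the stationary model, giving $\mathbf{W}^{\eps;\mathrm{ren}}\to\mathbf{W}$ in $\Cr([0,T])$ in probability in the sense of \cite{ch2016analytic}; (ii) the identification of $(W^H,1)$ as a singular controlled rough path in $\D_{\widehat{W}^H}^{\gamma+H,H^-}((0,T])$ from the preceding proposition, together with its smooth analogue $(W^{\eps,H},1)\in\D_{\widehat{W}^{\eps,H}}^{\gamma+H,H^-}((0,T])$ at level $\eps$; and (iii) the continuity of the improper rough integration map supplied by Theorem \ref{thm:improper_rough}. The restriction $H>1/4$ is exactly what makes the parameter choice $\alpha=(1/2)^-$, $\beta=H^-$, $\gamma=(1/4)^+$, $\eta_1=H^-$, $\eta_2=\alpha+\beta$ compatible with all the hypotheses of that theorem, in particular $\alpha+\beta+\gamma>1$ and $2(\eta_2-\beta)+\eta_1-\alpha>0$.

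The first step is to establish the pathwise identity
\begin{equation*}
\int_0^t f(W^{\eps,H}_s)\,\xi^{\eps}_s\,ds - c^{\eps,H}\int_0^t Df(W^{\eps,H}_s)\,ds = \int_0^t f(W^{\eps,H}_s)\,d\mathbf{W}^{\eps;\mathrm{ren}}_s,
\end{equation*}
where on the right $(f(W^{\eps,H}),Df(W^{\eps,H}))$ is viewed as a (smooth) controlled rough path and the integral is the corresponding rough integral against $\mathbf{W}^{\eps;\mathrm{ren}}$. Since everything is smooth in $\eps>0$, this reduces to a direct Taylor expansion of $f(W^{\eps,H}_\cdot)$; the subtracted term $c^{\eps,H}(t-s)$ in the third component of $\mathbf{W}^{\eps;\mathrm{ren}}$ is precisely what produces the It\^o--Stratonovich correction on the left-hand side.

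The second step is the convergence of the rough integral. Since $f\in\CC_b^3$, composition yields $(f(W^{\eps,H}),Df(W^{\eps,H}))\in\D_{\widehat{W}^{\eps,H}}^{\gamma+H,H^-}((0,T])$ with $\eps$-uniform norm, and this converges to $(f(W^H),Df(W^H))$ in the same singular controlled sense, using Lemma \ref{lem:R bound} applied to $B=W^{\eps}-W$ to control the remainder at the singular endpoint. Combining this with $\mathbf{W}^{\eps;\mathrm{ren}}\to\mathbf{W}$ and the joint continuity of the singular rough integration map (obtained by re-running the proof of Theorem \ref{thm:improper_rough} on differences), one concludes
\[
\int_0^\cdot f(W^{\eps,H})\,d\mathbf{W}^{\eps;\mathrm{ren}} \longrightarrow \int_0^\cdot f(W^H)\,d\mathbf{W}
\]
uniformly on $[0,T]$ in probability; the limit agrees almost surely with the It\^o integral $\int_0^\cdot f(W^H_s)\,dW_s$ by the standard rough-vs-It\^o identification at the level of $\mathbf{W}$. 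The divergence rate of
\[
c^{\eps,H}=\mathbf{E}[\widehat{W}^{\eps,H}_0\,\xi^{\eps}_0]=\int_{\R}\widehat{K}^H(-r)\,(\varrho_\eps\ast\varrho_\eps)(r)\,dr
\]
then follows by scaling, since $\widehat{K}^H(u)\sim u^{H-1/2}$ near $0$ and $\varrho_\eps\ast\varrho_\eps$ is an approximate delta at scale $\eps$. The main obstacle is the second step: promoting the two separate convergences (model and singular controlled rough path) to a joint continuity statement for the improper rough integral in the singular H\"older topology, with quantitative control at the endpoint $s=0$. This is exactly where the positive singular exponent $\eta_1=H^->0$ and the restriction $H>1/4$ are essential.
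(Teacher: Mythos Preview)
Your proposal is correct and follows the same overall architecture as the paper: BPHZ convergence of the stationary model, identification of $(W^H,1)$ (and its $\eps$-analogue) as singular controlled rough paths, composition with $f$, and then continuity of integration to pass to the limit; the It\^o identification and the scaling computation for $c^{\eps,H}$ are handled the same way.

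The one genuine difference is in how the convergence step is executed. The paper does not stay in the rough path category: after establishing \eqref{identity_reconstruct} it passes to the regularity-structures side via Theorem \ref{thm:controlled_RP} and the identity \eqref{eq:recon_rough}, writes both sides as reconstructions $\mathcal{R}_{\mathbf{W}}(F(W^H){\color{blue}\Xi})$ and $\mathcal{R}^{\mathrm{ren}}_{\mathbf{W}^\eps}(F(W^{\eps,H}){\color{blue}\Xi})$, and then invokes the off-the-shelf continuity of reconstruction for singular modelled distributions from \cite{Hairer2014,ch2016analytic} to get \eqref{convergence_reconstruction}; the primitive operator then upgrades this to uniform convergence. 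You instead propose to obtain the convergence directly in the rough path category by proving a two-model continuity version of Theorem \ref{thm:improper_rough}. This is entirely legitimate---the Remark following Theorem \ref{thm:controlled_RP} explains precisely why the two routes are equivalent here---but it trades a citation for an extra piece of work: you must actually carry out the ``re-run on differences'' argument, including the comparison of remainders taken with respect to two different drivers $\widehat{W}^{\eps,H}$ and $\widehat{W}^H$. Your use of Lemma \ref{lem:R bound} with $B=W^\eps-W$ is the right tool for the endpoint control, and you correctly flag this joint continuity as the main obstacle. The paper's route buys brevity by outsourcing that step to \cite{Hairer2014}; yours buys self-containment at the cost of spelling out a stability estimate that the paper never states explicitly.
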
 
\begin{proof}
We split the proof into two parts: First of all we show that for any $f \in \CC^3_b$ and $H> 1/4$ the couple $F(W^H_t):=(f (W^H_t), Df (W^H_t))$ is still a singular controlled rough path and one has the a.s. identity
\begin{equation}\label{identity_reconstruct}
\int^t_{0^+}F(W^H_s)d\mathbf{W}_s= \int_0^t f ({W}^H_s) d W_s\,.
\end{equation}
The first property on $F(W^H_t)$ is a variation on standard results of composition of controlled rough paths \cite[Lem. 7.3, Thm 7.5]{FH2020} and easy to show by hand. To establish the identity  \eqref{identity_reconstruct} we first note, by straightforward adaptation of \cite[Prop. 5.1]{FH2020}, that the a.s. identity
\begin{equation}\label{identity_reconstruct2}
\int^t_{s}F(W^H_r)d\mathbf{W}_r= \int_s^t f ({W}^H_r) d W_r\,,
\end{equation}
holds for any $0<s<t$. By Theorem \ref{thm:improper_rough}, the left-hand side of \eqref{identity_reconstruct2} converges to the left-hand side of \eqref{identity_reconstruct}. 
Once we have the identity \eqref{identity_reconstruct} we can recall the path-wise identity \eqref{eq:recon_rough} to have
\begin{equation}\label{almost_last}
\int_0^t f ({W}^H_s) d W_s= \int_0^t \mathcal{R}_{\mathbf{W}}(F(W^H){\color{blue}\Xi})(ds)\,,
\end{equation}
where $\mathcal{R}_{\mathbf{W}}$ is the reconstruction operator associated to the model $\Pi_{\mathbf{W}}$ {\color{black} and $F(W^H){\color{blue}\Xi}$ is given by
\begin{equation}\label{modelled}
(F(W^H){\color{blue}\Xi})_t:= f (W^H_t){\color{blue}\Xi}+ Df (W^H_t){\color{blue}\hat{X}\Xi}\,.
\end{equation}}
Starting from \eqref{almost_last}, in the second part of  the proof we apply the result contained in  \cite{Hairer2014, ch2016analytic} to obtain that this reconstruction operation is the following limit in probability 
\begin{equation}\label{convergence_reconstruction}
{\mathcal{R}}^{\mathrm{ren}}_{\mathbf{W}^{\eps}}(F(W^{\eps,H}){\color{blue}\Xi})\to\mathcal{R}_{\mathbf{W}}(F(W^H){\color{blue}\Xi})\,,
\end{equation}
where ${\mathcal{R}}^{\mathrm{ren}}_{\mathbf{W}^{\eps}}={\mathcal{R}}_{\mathbf{W}^{\eps;\mathrm{ren}}}$ is a sequence of smooth functions {\color{black}and $F(W^{\eps,H}){\color{blue}\Xi}$ is defined like in \eqref{modelled} with $W^{\eps,H}$ instead that $W^{H}$. The limit in probability in  \eqref{convergence_reconstruction} is taken by looking at  ${\mathcal{R}}^{\mathrm{ren}}_{\mathbf{W}^{\eps}}(F(W^{\eps,H}){\color{blue}\Xi})$ and $\mathcal{R}_{\mathbf{W}}(F(W^H){\color{blue}\Xi})$ as random variables belonging to some negative H\"older space $\mathcal{C}^{\delta}([0,T])$ \cite[Def 3.7]{Hairer2014} where $\delta\in (-1,0)$. Since the operation of taking a primitive is a continuous operator  between $ \mathcal{C}^{\delta}([0,T])$ and continuous function over $[0,T]$, the convergence \eqref{convergence_reconstruction} implies the convergence in probability
\begin{equation}\label{convergence_reconstruction2}
\left(\int_0^t{\mathcal{R}}^{\mathrm{ren}}_{\mathbf{W}^{\eps}}(F(W^{\eps,H}){\color{blue}\Xi})(ds)\right)_{t\in [0,T]}\to\left(\int_0^t\mathcal{R}_{\mathbf{W}}(F(W^H){\color{blue}\Xi})(ds)\right)_{t\in [0,T]}\,,
\end{equation}
uniformly on compact set. To identify the values of the approximating sequence in \eqref{convergence_reconstruction2}, we use again \eqref{identity_reconstruct} and we calculate the rough integration of {\color{black}$F(W^{\eps,H}_t):=(f (W^{\eps,H}_t), Df (W^{\eps,H}_t))$ against $\mathbf{W}^{\eps;\mathrm{ren}}$}. It is clear that the second level perturbation which distinguishes these models leads to second order correction, which manifests itself as
\begin{equation}\label{explicit_renorm}
\int_0^t{\mathcal{R}}^{\mathrm{ren}}_{\mathbf{W}^{\eps}}(F(W^{\eps,H}){\color{blue}\Xi})(ds)=\int^t_{0}F(W^{\eps,H}_s)d\mathbf{\mathbf{W}}_s^{\eps;\mathrm{ren}}=\int_0^t f (W^{\eps,H}_s) \xi^{\eps}_sds - c^{\eps,H} \int_0^t Df (W^{\eps,H}_s) ds\,.
\end{equation}}
Combining the convergence \eqref{convergence_reconstruction} with \eqref{explicit_renorm} we obtain the result. To compute the renormalisation constant from \eqref{ren_constant} we use the It\^o isometry obtaining
\[c^{\eps,H}= \int_{\R} \int_{\R} \widehat{K}^H(t-s)\varrho_{\eps}(s-r)\varrho_{\eps}(t-r)drds= \int \widehat{K}^H(-s)\bar{\varrho}_{\eps}(s)ds\,,\]
where $\bar{\varrho}(y):=(\varrho(-\,\cdot)*\varrho) (y)$. Since $ \widehat{K}^H= K^H$ on a neighbourhood of $0$ we can easily assume that the support of $\bar{\varrho}$ is contained in it by taking $\eps$ sufficiently small. Therefore we obtain
\[c^{\eps,H}= \eps^{-1}\int_{\R}\widehat{K}^H(-s)\bar{\varrho}(\eps^{-1}s)ds= \eps^{-1}\int_{\R} K^H(s)\bar{\varrho}(\eps^{-1}s)ds=\eps^{H-1/2}\int_{0}^{+\infty}K^H(s)\bar{\varrho}(s)ds\,.\]
\end{proof}

{{\bf Acknowledgements:} CB was supported by DFG Research Unit FOR2402. PKF has received funding from the European Research Council (ERC) under the European Union's Horizon 2020 research and innovation programme (grant agreement No. 683164) and the DFG Excellence Cluster MATH+ (Projekt AA4-2). MG thanks the support of the Austrian Science Fund (FWF) through the Lise Meitner programme M2250-N32.}

%
\bibliographystyle{abbrv}
\bibliography{bibliography}

\end{document}